\documentclass[a4paper]{amsart}
\usepackage{amsmath,amssymb,amsthm,mathtools}
\usepackage[margin=40mm]{geometry}
\usepackage{enumitem}
\usepackage{hyperref}
\hypersetup{pdftitle={Finite Chains of Two-Complexes and Acyclic Covers},
  pdfauthor={Laurent Bartholdi and Roman Mikhailov}}
\setlist{topsep=3pt,itemsep=2pt}
\setlist[enumerate]{leftmargin=10mm}
\setlist[itemize]{leftmargin=8mm}

\theoremstyle{plain}
\newtheorem*{maintheorem}{Theorem A}
\newtheorem*{maincorollary}{Corollary B}
\newtheorem{theorem}{Theorem}[section]
\newtheorem{proposition}[theorem]{Proposition}
\newtheorem{lemma}[theorem]{Lemma}

\theoremstyle{definition}
\newtheorem{definition}[theorem]{Definition}
\theoremstyle{remark}
\newtheorem{remark}[theorem]{Remark}

\numberwithin{equation}{section}

\newcommand{\Z}{\mathbb Z}
\newcommand{\Fp}{\mathbb F_p}
\newcommand{\pres}[1]{\langle #1\rangle}
\newcommand{\cx}[1]{K(#1)}
\newcommand{\Cone}{\operatorname{Cone}}
\newcommand{\supp}{\operatorname{supp}}
\newcommand{\Tp}{T^{+}}

\title{Finite Chains of Two-Complexes and Acyclic Covers}
\author{Laurent Bartholdi}
\author{Roman Mikhailov}
\date{October 3, 2026}
\subjclass[2020]{Primary 57M20; Secondary 57M10, 20F65, 20F67, 55Q05}
\keywords{Whitehead asphericity problem, two-complexes, Cockcroft complexes,
second homotopy group, acyclic covers, CAT(0) cube complexes}

\begin{document}
\begin{abstract}
A negative answer to Whitehead's asphericity problem would follow from an infinite
chain of two-complexes, starting with a non-aspherical one, in which every inclusion
induces the zero map on second homotopy groups. We prove that a connected
two-complex $K$ is the first term of such chains of every \emph{finite} length if
and only if $K$ has a connected acyclic regular cover. In particular, there is a
finite non-aspherical two-complex $K$, a presentation complex of $\mathrm{SL}(2,5)$,
such that for every $n$ there is a strictly increasing chain of finite two-complexes
$K=K_0\subset K_1\subset\dots\subset K_n$ in which all inclusions are zero on $\pi_2$.
\end{abstract}
\maketitle

\section{Introduction}\label{sec:intro}

A two-complex is \emph{aspherical} if its second homotopy group vanishes. Whitehead
asked whether every subcomplex of an aspherical two-complex is aspherical
\cite{Whitehead}; see \cite{Bogley} for a survey. Suppose that
\begin{equation}\label{eq:infinite-chain}
K_0\subset K_1\subset\cdots,\quad \pi_2(K_0)\ne0,\quad
\bigl(\pi_2(K_i)\to\pi_2(K_{i+1})\bigr)=0\ \forall i,
\end{equation}
is a chain of two-complexes. Every map $S^2\to\bigcup_iK_i$ has compact image, hence
lands in some $K_i$, and is therefore null-homotopic in $K_{i+1}$. So
$L=\bigcup_iK_i$ is aspherical and contains the non-aspherical subcomplex $K_0$, a
negative answer to Whitehead's question \cite[\S1]{Hitchman}. Conversely, Howie
\cite{Howie} showed that if the answer is negative, there is a counterexample that
either is finite or is the union of an infinite chain of finite non-aspherical
subcomplexes as in~\eqref{eq:infinite-chain}, with every inclusion null-homotopic.

This paper produces examples of two-complexes that admit a chain as in~\eqref{eq:infinite-chain} of arbitrary finite length, via the following characterization:
\begin{maintheorem}\label{thm:A}
For a two-complex $K$ the following are equivalent.
\begin{enumerate}[label=\textup{(\arabic*)}]
\item For every $n\ge1$ there is a chain of two-complexes
$K=K_0\subsetneq K_1\subsetneq\cdots\subsetneq K_n$
in which every inclusion induces the zero map on $\pi_2$.
\item $K$ has a connected acyclic regular cover.
\end{enumerate}
If $K$ is finite and acyclic, the $K_i$ in~\textup{(1)} can be chosen finite.
\end{maintheorem}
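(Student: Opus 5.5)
The plan is to translate both conditions into statements about regular covers and their $H_1$, $H_2$, using that $\pi_2(K)=H_2(\widetilde K)$. The basic tool is one lemma about a single inclusion. Let $K\subset L$ be connected two-complexes, let $N=\ker(\pi_1K\to\pi_1L)$, and let $K_N\to K$ be the regular cover with group $\pi_1K/N$. Then $K_N$ is a component of the preimage of $K$ in $\widetilde L$. Since there are no $3$-cells, $H_2$ of a subcomplex of a two-complex is a subgroup, so $H_2(K_N)\to H_2(\widetilde L)$ is injective. The map $\pi_2(K)\to\pi_2(L)$ factors as
\[
\pi_2(K)=H_2(\widetilde K)\to H_2(K_N)\hookrightarrow H_2(\widetilde L).
\]
Hence the inclusion is zero on $\pi_2$ if and only if $\pi_2(K)\to H_2(K_N)$ is zero. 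By Hopf's exact sequence $\pi_2(K_N)\to H_2(K_N)\to H_2(N)\to0$, this says exactly that $H_2(K_N)\cong H_2(N)$. In particular, any $L$ whose $\pi_1$ is $\pi_1K/N$ for a normal subgroup $N$ with $H_2(K_N)=0$ receives the zero map on $\pi_2$.

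For (2)$\Rightarrow$(1), let $X=K_N$ be a connected acyclic regular cover with group $G=\pi_1K/N$. Let $K_1$ be $K$ with $2$-cells attached along normal generators of $N$ (finitely many if $K$ is finite and $N$ is finitely normally generated). Then $\pi_1K_1=G$, the preimage of $K$ in $\widetilde K_1$ is $X$, and by the lemma $K\subset K_1$ is zero on $\pi_2$. If $N=1$, then $K$ is aspherical and we instead wedge on a cell to get strictness.

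The real work is iterating: $K_1$ must again have a connected acyclic regular cover with nontrivial kernel. A two-complex with trivial $\pi_1$ can never map $\pi_2$ to zero, since its $H_2$ injects into any larger two-complex. Naively killing all of $N$ fails, because $H_2(\widetilde K_1)$ becomes free on the new cells. So I would kill $N$ only in a copy: set $K_1=K\cup_{\mathrm{cells}}(\text{killing }N)\vee K'$, or more cleverly attach the cells along loops of a second copy of $K$ glued in. The aim is that $\pi_1K_1$ surjects onto a group whose cover is again acyclic, for instance by arranging $\pi_1K_1\cong G*\pi_1K$ with the cover built from $X$ and the $G$-translates of a cover of $K$ (a tree-of-spaces Mayer--Vietoris argument). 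Iterating $n$ times gives the chain. When $K$ is finite and acyclic, $K$ itself is the acyclic cover with $N=\pi_1K$. That group is finitely generated, so only finitely many cells are needed, and the gluings stay finite.

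For (1)$\Rightarrow$(2), given the chain, put $N_i=\ker(\pi_1K\to\pi_1K_i)$, which is decreasing in $i$. By the lemma applied to $K_{i-1}\subset K_i$, the composite $\pi_2(K)\to H_2(K_{N_i})$ vanishes. I expect to use the length-$n$ chains for all $n$ to show that $N=N_n$ (or a suitable intersection) gives a cover with $H_1=N^{\mathrm{ab}}=0$ and $H_2=0$. The first thing to try is to show that the image of $H_2(K_{N_{i}})$ in $H_2(K_{N_{i+1}})$ is killed, by comparing with $\pi_2(K_{i})\to\pi_2(K_{i+1})$. I expect this direction, and especially obtaining perfectness of $N$ so that $H_1$ of the cover vanishes, to be the main obstacle; I would attack it by choosing the cover from the image of $\pi_1K$ in $\pi_1K_1$ and checking acyclicity by a Mayer--Vietoris argument along the chain.
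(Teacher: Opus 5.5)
Your lemma is correct: $K\subset L$ is zero on $\pi_2$ iff $K_N$ is Cockcroft, where $N=\ker(\pi_1K\to\pi_1L)$. Your first step $K\subsetneq K_1$ is also correct. But in both directions the argument stops exactly where the real work begins.

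\textbf{(2)$\Rightarrow$(1).} Your plan is to make $K_1$ again have a connected acyclic regular cover, so the step can be repeated. If that were always possible, repeating it forever would give an infinite chain as in~\eqref{eq:infinite-chain} starting at the non-aspherical complex of Corollary~B. That is a negative answer to Whitehead's (open) question. The paper stresses that its chains for different $n$ are \emph{not} extensions of one another.

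Your concrete candidate also fails. Take $K^+=K\cup(\text{cells killing }N)$ with $N\ne1$. In every regular cover of $K^+$, the new cells are attached along null-homologous loops: they represent elements of $N$, which is perfect because $K_N$ is acyclic. So they contribute a nonzero free summand to $H_2$. Moreover, $H_2$ of the pieces injects into $H_2$ of any cover of $K^+\vee K'$, so $K^+\vee K'$ has no acyclic regular cover.

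The missing idea is to lengthen a chain by rebuilding all of it rather than extending it.
\begin{itemize}
\item The paper reduces to the acyclic cover $D$ via the pushout $E_i=K\cup_pL_i$ (Proposition~\ref{prop:descent}).
\item Over $D$ it replaces $(D,L_1,\dots,L_m)$ by $(D,T(L_1),\dots,T(L_m),\Tp(L_m))$. Here $T$ turns extra generators into commutators and replaces the corrected relators by blocks.
\item The invariant is only that all terms are Cockcroft, all inclusions are zero on $\pi_2$, and $\pi_1(D)$ dies.
\item Property~\ref{B2} makes $T$ preserve zero maps on $\pi_2$. It is proved by a CAT(0) argument for a cube complex whose vertex links are a triangulated $\Sigma_q$, truncated at one vertex.
\item Property~\ref{B3} makes the terminal term Cockcroft.
\end{itemize}

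\textbf{(1)$\Rightarrow$(2).} The $N_i$ increase, not decrease. Choosing $N=N_1$ or $N=N_n$ fails. In $S^1\subsetneq D^2\subsetneq D^2\vee S^1\subsetneq D^2\vee S^1\vee S^1\subsetneq\cdots$ (with $D^2$ the disk), all terms are aspherical, so every inclusion is zero on $\pi_2$. Also $N_i=\pi_1(S^1)$ for $i\ge1$, yet $K_{N_i}=S^1$ is not acyclic.

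The comparison you propose as a first step is right, and it holds in a sharper form. A $2$-cycle of $K_{N_r}\subset\widetilde K_r$ with coefficients $k=\Z$ or $\Fp$ is a class in $H_2(\widetilde K_r;k)=\pi_2(K_r)\otimes k$. This class maps to $0$ in $H_2(\widetilde K_{r+1};k)$. Since $\widetilde K_{r+1}$ has no $3$-cells, its image in $K_{N_{r+1}}$ is the zero \emph{chain}. Hence a fixed $2$-chain on $\widetilde K$ projects to a nonzero cycle at most at one level. Consequently any $m$ such ``tests'' are passed simultaneously at some level of a chain of length $m$.

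What you lack is a way to combine chains of different lengths, and a source of perfectness. The paper proceeds as follows.
\begin{itemize}
\item It uses compactness of the space of normal subgroups in $\{0,1\}^G$, where passing a test is a clopen condition. This gives $N$ with $H_2(K_N;\Z)=H_2(K_N;\Fp)=0$ for all $p$.
\item It takes such an $N$ minimal, by Zorn's lemma.
\item From the $\Fp$-vanishing it deduces that $N/[N,N]$ is torsion-free.
\item It applies the Strebel-type Lemma~\ref{lem:coinv} to see that $[N,N]$ also passes all tests, whence $N=[N,N]$ by minimality.
\end{itemize}
In the $S^1$ example the kernels $N_i$ pass all tests but are not perfect; it is minimality that replaces $\pi_1(S^1)$ by $1$. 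No Mayer--Vietoris argument along a single chain can supply this.
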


Recall that a two-complex $K$ is \emph{Cockcroft} \cite{Cockcroft} if its Hurewicz
map $h_K\colon\pi_2(K)\to H_2(K)$ is zero. By Hopf's exact sequence
$\pi_2(K)\to H_2(K)\to H_2(\pi_1K)\to0$ \cite[Ch.~II, \S5]{Brown}, this holds if and
only if $H_2(K)\to H_2(\pi_1K)$ is an isomorphism. Moreover $K$ is Cockcroft if and only
if there is a single strict inclusion $K\subsetneq L$ of two-complexes that is zero
on~$\pi_2$ (Lemma~\ref{lem:toolbox}(a)). In particular, in a chain as in~(1) the
complexes $K_0,\dots,K_{n-1}$ are Cockcroft, and truncating a chain of length $n+1$
shows that $K_n$ can be taken Cockcroft too. Theorem~A thus describes the complexes
that are ``Cockcroft to every finite order''.

Theorem~A does not produce an infinite chain~\eqref{eq:infinite-chain}: the chains
constructed for different $n$ are not extensions of one another.

\begin{maincorollary}
Let $K$ be the presentation complex of
\[A=\pres{x,y\mid x^2yx^{-1}y,\ xy^4xy^{-1}}.\]
Then $\pi_2(K)\cong\Z^{119}$, and for every $n\ge1$ there is a chain of finite
two-complexes $K=K_0\subsetneq K_1\subsetneq\cdots\subsetneq K_n$ in which every
inclusion is zero on $\pi_2$.
\end{maincorollary}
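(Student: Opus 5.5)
We deduce Corollary~B from the final clause of Theorem~A. The task then splits into three verifications: $K$ is finite and acyclic, $\pi_1K\cong A$ is a finite group, and $\pi_2(K)\cong\Z^{119}$. The abstract identifies $A$ with $\mathrm{SL}(2,5)$, the binary icosahedral group of order $120$.

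\textbf{Finiteness and acyclicity.} $K$ has one $0$-cell, two $1$-cells and two $2$-cells, so it is finite. Abelianizing the relators gives exponent sums
\[
x^2yx^{-1}y\mapsto(1,2),\qquad xy^4xy^{-1}\mapsto(2,3).
\]
The relation matrix $\begin{pmatrix}1&2\\2&3\end{pmatrix}$ has determinant $-1$. Hence $H_1(K)=0$, the cellular boundary $\partial_2$ is injective, and $H_2(K)=0$. So $K$ is acyclic, and Theorem~A (2)$\Rightarrow$(1) applies with $K$ itself as the connected acyclic regular cover (the trivial cover). The final clause of Theorem~A then yields the finite chains.

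\textbf{Identifying $\pi_1$.} I would show $A\cong\mathrm{SL}(2,5)$ by exhibiting explicit matrices $X,Y\in\mathrm{SL}(2,5)$ that satisfy both relators and generate the group. It suffices that their images in $\mathrm{PSL}(2,5)\cong A_5$ have orders that force generation, since the only index-$2$ subgroup candidate is ruled out because $\mathrm{SL}(2,5)$ has no subgroup mapping isomorphically onto $A_5$. This gives a surjection $A\twoheadrightarrow\mathrm{SL}(2,5)$.

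The main obstacle is proving $|A|\le120$. I would first try a Todd--Coxeter coset enumeration over a suitable subgroup, for instance $\langle x\rangle$. Alternatively, I would manipulate the relators by hand to derive the relations of the standard presentation $\langle a,b\mid a^2=b^3=(ab)^5\rangle$ of the binary icosahedral group. For example, the first relator gives $y^{x}=x^{-1}y^{-1}x\cdot\ldots$, which expresses a conjugation action that can be used to pin down the orders of $x$ and $y$. Either method shows $A$ is finite of order $120$.

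\textbf{Computing $\pi_2$.} The universal cover $\widetilde K$ is a finite simply connected $2$-complex with $120$ $0$-cells, $240$ $1$-cells and $240$ $2$-cells. So $\chi(\widetilde K)=120\cdot\chi(K)=120$, since $\chi(K)=1-2+2=1$. Because $\widetilde K$ is simply connected, $H_1(\widetilde K)=0$ and $H_0(\widetilde K)=\Z$. Then $H_2(\widetilde K)$ is free abelian, being a subgroup of the free cellular chain group, of rank $\chi(\widetilde K)-1=119$. By the Hurewicz theorem, $\pi_2(K)=\pi_2(\widetilde K)\cong H_2(\widetilde K)\cong\Z^{119}$. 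In particular $K$ is not aspherical, so the chains provided by Theorem~A start at a genuinely non-aspherical complex, as the corollary asserts.
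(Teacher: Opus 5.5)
Your proposal is correct and follows the paper's proof: acyclicity from the exponent-sum matrix of determinant $-1$, Theorem~A applied with $K$ as its own acyclic cover, and $\pi_2(K)\cong H_2(\widetilde K)\cong\Z^{119}$ from $\chi(\widetilde K)=120$ once $|A|=120$ is known. The paper gets the order from a coset enumeration and then identifies $A$ with $\mathrm{SL}(2,5)$ by perfectness, but the statement itself only needs the order. That order can also be checked by hand: with $t=xy^{-1}$ the relators become $ty^5t$ and $(ty)^2t^{-1}y$, so $t^2=y^{-5}$ is central and $A\cong\langle t,y\mid t^2=y^{-5}=(ty)^3\rangle$, which is the binary icosahedral group $\langle a,b,c\mid a^2=b^3=c^5=abc\rangle$ of order $120$ (take $a=t$, $b=x$, $c=y^{-1}$).
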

\begin{proof}
$K$ has one vertex, two edges and two $2$-cells, so its cellular boundary $\partial_1$
vanishes and $\partial_2$ is the exponent-sum matrix
$\begin{psmallmatrix}1&2\\2&3\end{psmallmatrix}$, of determinant $-1$. Hence $K$ is
acyclic, so it is its own connected acyclic regular cover, and Theorem~A gives the
chains, with finite terms. A coset enumeration shows that $G=\pi_1(K)$ has order
$120$; being perfect, $G$ is $\mathrm{SL}(2,5)$. The universal cover $\widetilde K$ is
simply connected with Euler characteristic $120\,\chi(K)=120$, so
$\pi_2(K)\cong H_2(\widetilde K)\cong\Z^{119}$.
\end{proof}

\subsection*{Previous work}
The second author \cite[Theorem~1.1]{Mikhailov} constructed chains
$K_0\subset K_1\subset K_2$ of finite presentation complexes with $\pi_2(K_0)\ne0$ and
both inclusions zero on $\pi_2$. Corollary~B provides such chains of every finite length.

\subsection*{Outline of the proof}
\begin{itemize}
\item \textbf{(1)$\Rightarrow$(2)} (Section~\ref{sec:necessity}). A chain of length $m$
gives $m+1$ quotients of $G=\pi_1(K)$. To every $2$-chain $v$ on the universal cover of
$K$ we associate a ``cycle test'', and show that $v$ fails the test at no more than one
of these quotients. Hence any $m$ tests are passed simultaneously by some quotient.
A compactness argument in the space of normal subgroups of $G$ then gives a normal
subgroup $N$ passing all tests, which means $H_2(K_N)=0$ with all coefficients. If $N$
is minimal with this property, a Strebel-type lemma shows that $N$ is perfect, so the
cover $K_N$ is acyclic.
\item \textbf{(2)$\Rightarrow$(1)} (Sections~\ref{sec:descent}--\ref{sec:blocks}).
In Section~\ref{sec:descent} we show that it suffices to build the chain over the
acyclic cover $D$ itself, with $\pi_1(D)$ dying after the first step; the chain
descends to $K$ by a pushout. In Section~\ref{sec:acyclic} we build such chains over an
acyclic $D$, by iterating an operation that turns generators into commutators and
replaces relators by ``blocks'', and then appending a terminal extension. The blocks
are used through three properties \ref{B1}--\ref{B3}. In Section~\ref{sec:blocks} we
construct the blocks from a truncated non-positively curved cube complex, using
constructions of Davis \cite{Davis} and of Davis--Januszkiewicz--Weinberger \cite{DJW}.
\end{itemize}

\subsection*{Conventions}
A \emph{two-complex} is a connected CW-complex of dimension at most~$2$, possibly with
infinitely many cells. An inclusion $K\subsetneq L$ means that $K$ is a proper
subcomplex of $L$. Homology has integer coefficients unless stated otherwise. A
connected space is \emph{acyclic} if its reduced integral homology vanishes. For a
connected complex $K$ with $G=\pi_1(K)$ and a normal subgroup $N\trianglelefteq G$, we
write $K_N$ for the connected regular cover with $\pi_1(K_N)=N$; its deck group is $G/N$.
We write $\widetilde X$ for the universal cover of $X$, $h_X\colon\pi_2(X)\to H_2(X)$
for the Hurewicz map, and we identify $\pi_2(X)=H_2(\widetilde X)$ as
$\Z[\pi_1X]$-modules (Hurewicz). A map is \emph{zero on $\pi_2$} if it induces the zero
map on second homotopy groups.

\subsection*{AI usage and formalization}
A variety of AI tools, including ChatGPT 6 Astra, Claude 5.5 Fable, and Aristotle, were used to assist with the proof of the main results and the redaction of the manuscript. The interaction with the AI involved a large amount of back-and-forth and experimentation during approximately 5 months. The arguments and text have been reviewed, checked, and substantially revised by the authors, who are responsible for the final manuscript.

Theorem~\ref{thm:A} has been formalized in \textsf{Lean/Mathlib} and will shortly be available on the Palomar repository.

\section{Preliminaries}\label{sec:prelim}

\subsection{Cockcroft complexes and zero maps}
\begin{lemma}\label{lem:toolbox}
Let $A,X,Y,K,L$ be connected CW-complexes.
\begin{enumerate}[label=\textup{(\alph*)}]
\item Let $K$ be a two-complex. There is a two-complex $L\supsetneq K$ with
$K\subset L$ zero on $\pi_2$ if and only if $K$ is Cockcroft. If $K$ is finite, $L$ can
be chosen finite.
\item If $A$ is Cockcroft and $f\colon A\to Y$ is trivial on $\pi_1$, then $f$ is zero
on $\pi_2$.
\item Let $f\colon A\to X$ be such that $\pi_2(X)=\Z[\pi_1X]\cdot f_*\pi_2(A)$. If $A$ is
Cockcroft, so is $X$. If $g\colon X\to Y$ satisfies $g_*f_*=0$ on $\pi_2(A)$, then $g$ is
zero on $\pi_2$.
\item If $g\colon X\to Y$ is injective on $H_2$ and $Y$ is Cockcroft (e.g.\ $Y$ is
aspherical), then $X$ is Cockcroft. A wedge of Cockcroft complexes is Cockcroft.
\end{enumerate}
\end{lemma}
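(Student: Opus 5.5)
I would prove the four parts in the order (b), (d), (c), (a). Throughout I use naturality of the Hurewicz map, and I identify $\pi_2(X)=H_2(\widetilde X)$ with Hurewicz map the map induced by the covering $\widetilde X\to X$.

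\textbf{Part (b).} Since $f_*$ is trivial on $\pi_1$, $f$ lifts to $\tilde f\colon A\to\widetilde Y$. Then $f_*$ on $\pi_2$ factors as
\[\pi_2(A)\xrightarrow{\tilde f_*}\pi_2(\widetilde Y)\cong H_2(\widetilde Y)\cong\pi_2(Y).\]
By naturality of Hurewicz for the simply connected space $\widetilde Y$, the composite $\pi_2(A)\to\pi_2(\widetilde Y)\to H_2(\widetilde Y)$ equals $\tilde f_*\circ h_A$, which is zero. Since $h_{\widetilde Y}$ is an isomorphism, $f_*=0$ on $\pi_2$.

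\textbf{Part (d).} We have $g_*h_X=h_Yg_*=0$, and $g_*$ is injective on $H_2$, so $h_X=0$. For a wedge $\bigvee X_i$, $\pi_2$ is generated as a $\Z[\pi_1]$-module by the images of the $\pi_2(X_i)$. The Hurewicz map is invariant under the $\pi_1$-action, so it vanishes on these generators, hence on everything.

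\textbf{Part (c).} The Hurewicz map satisfies $h(\gamma\cdot x)=h(x)$, and every element of $\pi_2(X)$ is a finite sum $\sum\gamma_i f_*a_i$. So $h_X(\sum\gamma_if_*a_i)=\sum f_*h_A(a_i)=0$. Likewise $g_*$ is $\pi_1$-equivariant, in the sense $g_*(\gamma x)=g_*(\gamma)g_*(x)$, so $g_*(\sum\gamma_if_*a_i)=\sum g_*(\gamma_i)\,g_*f_*a_i=0$.

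\textbf{Part (a), ``if'' direction.} Suppose $K$ is Cockcroft. Let $L=K\cup\Cone(K^{(1)})$, or more economically attach $2$-cells killing $\pi_1(K)$ (one cell per relator-free generator along edges, plus collapsing a maximal tree), while also ensuring a strict inclusion. A clean choice is to add one $2$-cell along each loop of a generating set of $\pi_1(K)$ based at a vertex. If $\pi_1(K)=1$, first attach a new edge (a loop) and then a $2$-cell on it. This keeps $L$ finite when $K$ is finite, because a finite complex has finitely generated $\pi_1$. Then $K\to L$ is trivial on $\pi_1$, and (b) gives that it is zero on $\pi_2$. Strictness holds because cells were added. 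If one insists on adding cells even when $\pi_1$ is trivial, adding a disk $D^2$ glued along a point also works, since it is wedge-like and harmless.

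\textbf{Part (a), ``only if'' direction.} This is the main obstacle. Suppose $K\subsetneq L$ is a two-complex inclusion that is zero on $\pi_2$. The key fact is that for two-complexes $H_2(K)=Z_2(K)\subset Z_2(L)=H_2(L)$ is injective, because there are no $3$-cells. Then $h_K$ composed with this injection equals $h_L\circ i_*=0$, so $h_K=0$; this is (d) applied with $Y=L$ using only the Hurewicz naturality, not the Cockcroft property of $L$. The only subtlety is making sure $H_2$-injectivity of subcomplex inclusions of two-complexes is stated, and that strictness is handled in the construction for the ``if'' direction.
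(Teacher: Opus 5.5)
Your proof is correct. For (a), (b), (c) and the first half of (d) it is essentially the paper's argument. In the ``if'' direction of (a) you get $i_*=0$ from (b), since $K\to L$ is trivial on $\pi_1$, rather than from $h_L$ being an isomorphism on the simply connected $L$; this is the same computation.

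The one genuine difference is the wedge statement in (d). You use that $\pi_2(\bigvee_r X_r)$ is generated over $\Z[\pi_1]$ by the images of the $\pi_2(X_r)$, together with $\pi_1$-invariance of the Hurewicz map. That fact is true, but you assert it without justification. It needs the observation that each $\pi_1(X_r)$ embeds in the free product. Hence the universal cover of the wedge is a tree of copies of the $\widetilde X_r$ glued at points, and its $H_2$ is the direct sum of theirs.

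The paper avoids any knowledge of $\pi_2(W)$. It uses the retractions $\rho_r\colon W\to X_r$. The map $H_2(W)=\bigoplus_r H_2(X_r)\to\prod_r H_2(X_r)$ with coordinates $H_2(\rho_r)$ is injective, and $H_2(\rho_r)h_W=h_{X_r}\rho_{r*}=0$. Your route gives more information, namely the module structure of $\pi_2(W)$. The paper's route is shorter and self-contained.

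Finally, in the ``if'' part of (a) you should commit to a single construction. The aside about ``collapsing a maximal tree'' does not describe an enlargement of $K$. By contrast, $K\cup\Cone(K^{(1)})$ would work uniformly: it is a simply connected two-complex, finite when $K$ is, and strictly larger than $K$ even when $\pi_1(K)=1$.
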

\begin{proof}
(a) Let $i\colon K\subset L$ with $L$ a two-complex. Since there are no $3$-cells,
$H_2=\ker\partial_2$ on cellular chains, so $H_2(i)$ is injective. If $i$ is zero on
$\pi_2$, then $H_2(i)\circ h_K=h_L\circ i_*=0$, hence $h_K=0$. Conversely, let $K$ be
Cockcroft. If $\pi_1(K)\ne1$, attach $2$-cells along loops representing a generating
set of $\pi_1(K)$ (finite if $K$ is finite). The resulting $L$ is simply connected, so
$h_L$ is an isomorphism, and $h_L\circ i_*=H_2(i)\circ h_K=0$ gives $i_*=0$. If
$\pi_1(K)=1$, then $h_K$ is an isomorphism and zero, so $\pi_2(K)=0$, and we may take
$L=K\cup e^1\cup e^2$ with $e^2$ attached along the loop $e^1$.

(b) By the lifting criterion, $f$ lifts to $\tilde f\colon A\to\widetilde Y$, and
$f_*=p_*\tilde f_*$ where $p\colon\widetilde Y\to Y$ is the covering. Now
$h_{\widetilde Y}\tilde f_*=H_2(\tilde f)h_A=0$, and $h_{\widetilde Y}$ is an
isomorphism, so $\tilde f_*=0$.

(c) The Hurewicz map is invariant under the action of $\pi_1$, so
$h_X(\gamma\cdot f_*s)=h_X(f_*s)=H_2(f)h_A(s)=0$ for $\gamma\in\pi_1X$, $s\in\pi_2A$.
Similarly $g_*(\gamma\cdot f_*s)=g_*(\gamma)\cdot g_*f_*s=0$.

(d) We have $H_2(g)\circ h_X=h_Y\circ g_*=0$ and $H_2(g)$ is injective, so $h_X=0$.
For a wedge $W=\bigvee_rX_r$ with retractions $\rho_r\colon W\to X_r$, the map
$H_2(W)\to\prod_rH_2(X_r)$ with coordinates $H_2(\rho_r)$ is injective, and
$H_2(\rho_r)h_W=h_{X_r}\rho_{r*}=0$.
\end{proof}

\subsection{Regular covers}\label{sec:fox}
Let $K$ be a two-complex and $G=\pi_1(K)$. Write $I$ and $J$ for the sets of $1$-cells
and $2$-cells of $K$, and fix an orientation and a lift to $\widetilde K$ of every cell.
For $N\trianglelefteq G$, the images of these lifts in $K_N$ are lifts of the cells of
$K$, and the cells of $K_N$ are their translates under the deck group $G/N$. Hence, for
$k=\Z$ or $k=\Fp$, the cellular chain module $C_2(K_N;k)$ is identified with
$k[G/N]^{(J)}$, and $C_1(K_N;k)$ with $k[G/N]^{(I)}$ (direct sums, left modules). Let
\[
\partial_{N}\colon k[G/N]^{(J)}\to k[G/N]^{(I)}
\]
be the cellular boundary map in these bases. For $v\in k[G]^{(J)}$ we write
$v_N\in k[G/N]^{(J)}$ for its image; then $\partial_N(v_N)=(\partial_1v)_N$, since the
covering $\widetilde K\to K_N$ is cellular and maps chosen lifts to chosen lifts.

\begin{proposition}\label{prop:fox}
$H_2(K_N;k)=\ker\partial_N$ \textup{(}over $k[G/N]$\textup{)}, and
$H_1(K_N)=N/[N,N]$.
\end{proposition}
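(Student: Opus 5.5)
Both claims are standard cellular-homology facts; the plan is to read each off the cellular chain complex of $K_N$ in the bases fixed above.

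\emph{First claim.} Since $K_N$ is a two-complex, its cellular chain complex with coefficients in $k$ has no $3$-cells. Hence $C_3(K_N;k)=0$ and
\[
H_2(K_N;k)=\ker\bigl(\partial\colon C_2(K_N;k)\to C_1(K_N;k)\bigr).
\]
Under the identifications $C_2(K_N;k)=k[G/N]^{(J)}$ and $C_1(K_N;k)=k[G/N]^{(I)}$, the cellular boundary is $\partial_N$ by definition. The one point to check is that $\partial_N$ is a map of left $k[G/N]$-modules, so that the kernel is a $k[G/N]$-submodule and the identification holds as modules. This follows because $G/N$ acts on $K_N$ by cellular deck transformations, and the cellular boundary commutes with cellular maps.

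\emph{Second claim.} The cover $K_N$ is a connected CW-complex (it is connected because it is the regular cover with $\pi_1(K_N)=N$). By the Hurewicz theorem in degree one, $H_1(K_N)\cong\pi_1(K_N)^{\mathrm{ab}}=N/[N,N]$. To keep this purely combinatorial, one could instead note that $\pi_1$ and $H_1$ of a CW-complex depend only on the $2$-skeleton, and compute both from a maximal tree. Citing Hurewicz suffices.

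The only point that needs any care is the module structure in the first claim: the basis of $C_2(K_N;k)$ consists of the chosen lifts, and $g\in G/N$ sends the lift of a cell $e$ to the translate $g\tilde e$. It follows that $\partial_N$ is right multiplication by a $J\times I$ matrix over $k[G/N]$, namely the image of the Fox derivative matrix. This is consistent with the stated compatibility $\partial_N(v_N)=(\partial_1 v)_N$. No real obstacle is expected.
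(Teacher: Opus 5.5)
Your proposal is correct and matches the paper's proof: with no $3$-cells, $H_2(K_N;k)=\ker\partial_N$, and Hurewicz gives $H_1(K_N)=\pi_1(K_N)^{\mathrm{ab}}=N/[N,N]$. One small caveat about your aside, which the proof does not need: identifying $\partial_N$ with the reduced Fox derivative matrix is literally true only when $K$ has a single vertex (Remark~\ref{rem:fox}); in general $\partial_N$ is right multiplication by the boundary matrix of the chosen lifts.
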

\begin{proof}
$K_N$ has no $3$-cells, so $H_2=\ker\partial_2$; and
$H_1(K_N)=\pi_1(K_N)^{\mathrm{ab}}=N/[N,N]$.
\end{proof}

\begin{remark}\label{rem:fox}
If $K$ has a single vertex, it is the presentation complex of a presentation
$\pres{x_i\ (i\in I)\mid r_j\ (j\in J)}$ of $G$, and $\partial_N$ is right
multiplication by the matrix $\bigl(\overline{\partial r_j/\partial x_i}\bigr)$ of Fox
derivatives, reduced to $\Z[G/N]$ \cite{Fox}, \cite[Ch.~II, \S3]{LyndonSchupp}. For
$N=G$ this is the exponent-sum matrix. Every two-complex is homotopy equivalent to such
a complex, by collapsing a maximal tree of its $1$-skeleton.
\end{remark}

\begin{lemma}\label{lem:components}
Let $K\subset X$ be connected CW-complexes and let $M$ be the kernel of
$\pi_1(K)\to\pi_1(X)$. Then every component of the preimage of $K$ in $\widetilde X$ is
a copy of $K_M$, and $\pi_1(X)$ permutes these components transitively.
\end{lemma}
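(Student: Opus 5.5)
Let $p\colon\widetilde X\to X$ be the universal covering, $G=\pi_1(X)$, and let $C$ be a component of $p^{-1}(K)$. Since $K$ is a subcomplex, $p^{-1}(K)$ is a subcomplex of $\widetilde X$ and $p|\colon p^{-1}(K)\to K$ is a covering map. Its restriction to the component $C$ is therefore a connected covering of $K$. I would identify the subgroup of $\pi_1(K)$ corresponding to $C$ and show it is the normal subgroup $M$; then by classification of covers, $C\cong K_M$ as covers of $K$.

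Fix a basepoint $k_0\in K$ and a lift $\tilde k_0\in C$. A loop $\gamma$ in $K$ at $k_0$ lifts, starting at $\tilde k_0$, to a path in $p^{-1}(K)$. By connectivity this path stays in $C$. The lift is closed in $C$ if and only if it is closed in $\widetilde X$, which happens if and only if $[\gamma]$ maps to the trivial element of $\pi_1(X,k_0)=G$, by the lifting property of the universal cover. So $p_*\pi_1(C,\tilde k_0)=\ker(\pi_1(K)\to\pi_1(X))=M$. This subgroup is normal, so the covering $C\to K$ is regular with $\pi_1(C)\cong M$, and $C$ is a copy of $K_M$.

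For transitivity, let $C'$ be another component, containing a lift $\tilde k_0'$ of $k_0$. Such a lift exists because $C'\to K$ is surjective: it is a covering of the connected $K$, or directly, $C'$ contains some point over $K$ and we can lift a path in $K$ to $k_0$. Deck transformations of the universal cover act transitively on fibres, so some $g\in G$ has $g\tilde k_0=\tilde k_0'$. Deck transformations preserve $p^{-1}(K)$ and permute its components, so $gC$ is a component containing $\tilde k_0'$, hence $gC=C'$.

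There is no real obstacle here. The only point needing care is the identification of closed lifts with $\ker(\pi_1K\to\pi_1X)$. This follows from the standard fact that a loop in $X$ lifts to a closed loop in $\widetilde X$ exactly when it is null-homotopic in $X$, applied to $\gamma$ viewed in $X$.
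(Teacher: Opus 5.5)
Your proof is correct and follows the same route as the paper. You restrict the universal covering to the preimage of $K$, identify $M$ via the criterion that a loop lifts to a closed loop exactly when it is trivial in $\pi_1(X)$, and get transitivity because the deck group acts transitively on the fibre over the base point; you are also a bit more explicit than the paper about why every component meets that fibre.
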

\begin{proof}
The preimage of $K$ is a covering space of $K$. A loop in $K$ lifts to a loop in
$\widetilde X$ if and only if it is trivial in $\pi_1(X)$, so the component through a
base lift is the cover with fundamental group $M$. The deck group acts transitively on
the fibre over the base point, hence on the components.
\end{proof}

\section{Proof of \texorpdfstring{$(1)\Rightarrow(2)$}{(1) implies (2)}}\label{sec:necessity}

Throughout this section $K$ is a two-complex satisfying~(1), $G=\pi_1(K)$, and we use
the notation of Section~\ref{sec:fox}. The letter $k$ denotes $\Z$ or $\Fp$ for a prime~$p$.

We begin with a special case of a theorem of Strebel \cite[Cor.~1.8(ii)]{Strebel}; we
include a proof for completeness.
\begin{lemma}\label{lem:coinv}
Let $H$ be a torsion-free abelian normal subgroup of a group $Q$, and let
$f\colon U\to V$ be a homomorphism of free left $k[Q]$-modules of arbitrary ranks.
If $k[Q/H]\otimes_{k[Q]}f$ is injective, then $f$ is injective.
\end{lemma}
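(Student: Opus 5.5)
We argue by contradiction: suppose $f(u)=0$ for some nonzero $u\in U$. First we reduce to the case where $H$ is finitely generated and the modules have finite rank. Writing $U=k[Q]^{(A)}$ and $V=k[Q]^{(B)}$, the element $u$ involves only finitely many basis elements of $U$ and finitely many group elements. Each $f(e_a)$ likewise involves finitely many basis elements of $V$ and finitely many group elements. We replace $Q$ by a finitely generated subgroup $Q_0$ containing all these group elements, and replace $H$ by $H\cap Q_0$; the latter need not be finitely generated, which we address below. The base change $k[Q]\otimes_{k[Q_0]}-$ is faithfully flat, since $k[Q]$ is free over $k[Q_0]$. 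One also checks that $k[Q/H]$ is free over $k[Q_0/(H\cap Q_0)]$. So injectivity of $k[Q/H]\otimes f$ restricted to the relevant finite-rank pieces implies injectivity of the corresponding map over $Q_0/(H\cap Q_0)$.

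A cleaner route avoids the reduction and instead uses a filtration by powers of the augmentation ideal of $H$. Let $\mathfrak a=\ker(k[H]\to k)$ and $\mathfrak A=k[Q]\mathfrak a$. Then $\mathfrak A$ is a two-sided ideal, since $H$ is normal, and $k[Q/H]=k[Q]/\mathfrak A$. The plan has two steps.

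\emph{Step 1: $\bigcap_n \mathfrak a^n=0$ in $k[H]$, hence $\bigcap_n\mathfrak A^nU=0$ for free $U$.} Any element of $k[H]$ lies in $k[H_0]$ for a finitely generated subgroup $H_0\cong\Z^d$, which is a Laurent polynomial ring. By Krull's intersection theorem, applied in the localization of this Noetherian domain at the augmentation ideal, $\bigcap_n\mathfrak a_0^n=0$. Since $\mathfrak a^n\cap k[H_0]=\mathfrak a_0^n$ (there is a retraction-type argument using the freeness of $k[H]$ over $k[H_0]$ with coset basis), the intersection is zero. Because $k[Q]$ is free as a right $k[H]$-module on coset representatives, we have $\mathfrak A^n=k[Q]\mathfrak a^n$ coefficientwise, and the intersection vanishes in any free module.

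\emph{Step 2: the graded pieces.} Since $H$ is torsion-free abelian, the graded ring $\bigoplus\mathfrak a^n/\mathfrak a^{n+1}$ is the symmetric algebra $\operatorname{Sym}_k(H\otimes k)$, which is free over $k$. This is the torsion-freeness input: for $k=\Fp$ we use $H\otimes\Fp$, and for $k=\Z$ we use the fact that $H$ is torsion-free, hence flat. It follows that $\mathfrak A^n U/\mathfrak A^{n+1}U\cong k[Q/H]\otimes_{k[Q]}U\otimes_k S^n$, where $S^n$ is a free $k$-module. The map induced by $f$ on this piece is $(k[Q/H]\otimes f)\otimes \mathrm{id}_{S^n}$, twisted by the conjugation action of $Q$ on $S^n$. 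We must check that this twisted map remains injective. To do so, we identify $M\otimes S^n$ (diagonal action) with $M\otimes S^n$ (action on the left factor only) via the standard untwisting isomorphism for induced modules over $k[Q/H]$. The map is then injective because $S^n$ is free over $k$. Finally, if $u\neq0$, Step 1 gives a largest $n$ with $u\in\mathfrak A^nU$. Its image in $\mathfrak A^nU/\mathfrak A^{n+1}U$ is nonzero and is killed by the graded map, because $f$ respects the filtration. This contradicts the injectivity just established.

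The main obstacle is Step 2: correctly identifying the graded pieces and handling the twisting. In particular, one must verify that $\mathfrak A^n=k[Q]\mathfrak a^n$ and that the graded quotients are $k[Q/H]$-free. I would first try to prove that $\mathfrak A^n/\mathfrak A^{n+1}\cong k[Q/H]\otimes_k(\mathfrak a^n/\mathfrak a^{n+1})$ as left $k[Q/H]$-modules, using a set-theoretic section $Q/H\to Q$.
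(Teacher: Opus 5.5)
\textbf{There is a genuine gap in Step~1.} The augmentation filtration of $k[H]$ need not be separated when $k=\Fp$ and $H$ is not finitely generated.

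Take $H=\Z[1/p]$. Every $h\in H$ is a $p$-th power $g^p$ with $g\in H$. In characteristic $p$ this gives $h-1=g^p-1=(g-1)^p\in\mathfrak a^p$. Hence $\mathfrak a=\mathfrak a^2=\mathfrak a^3=\cdots$ and $\bigcap_n\mathfrak a^n=\mathfrak a\neq0$.

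Your intermediate claim $\mathfrak a^n\cap k[H_0]=\mathfrak a_0^n$ already fails for $H=\pres{t}\cong\Z$ and $H_0=\pres{t^p}$: $t^p-1=(t-1)^p$ lies in $\mathfrak a^p$ but not in $\mathfrak a_0^2$. The coset projection $k[H]\to k[H_0]$ does not help, since it sends $h-1$ to $-1\notin\mathfrak a_0$ for $h\notin H_0$.

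For such $H$ you get $\mathfrak A^nU=\mathfrak AU$ for all $n\ge1$, so there is no largest $n$ with $u\in\mathfrak A^nU$. Your graded argument then only shows $\ker f\subseteq\mathfrak AU$, which is no contradiction.

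This case cannot be set aside. $H=\Z[1/p]$ is a normal subgroup of the finitely generated group $\Z[1/p]\rtimes\Z$, so your first-paragraph reduction to a finitely generated $Q_0$ does not remove it. Moreover, the lemma is used over $\Fp$ with $H=N/[N,N]$, which need not be finitely generated. Your method does work when $H$ is finitely generated (take $H_0=H$ and apply Krull directly).

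Step~2 is essentially fine, with one correction: for $k=\Z$ the module $\operatorname{Sym}^n_{\Z}(H)$ is flat but not free (e.g.\ $H=\mathbb Q$ gives $\mathbb Q$). Flatness is all the untwisted map $(k[Q/H]\otimes f)\otimes\mathrm{id}$ needs.

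\textbf{The missing idea is that separatedness is not needed at all.} What torsion-freeness buys is that $R=k[H]$ is a commutative domain, being a directed union of Laurent polynomial rings over $k$.

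Restrict scalars to $R$. Then $U$ and $V$ are free $R$-modules, and $k[Q/H]\otimes_{k[Q]}f$ becomes $k\otimes_R f$. Suppose finitely many vectors $f(e_1),\dots,f(e_m)$ have $k$-linearly independent images under the augmentation. Then some $m\times m$ minor of their (finitely supported) coordinate matrix has nonzero augmentation, so it is nonzero in $R$. Hence the vectors are independent over the fraction field of $R$, and \emph{a fortiori} over $R$.

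This is the paper's argument, phrased there as an induction that eliminates with a coordinate functional. It treats all torsion-free abelian $H$ uniformly, over $\Z$ and over every $\Fp$.
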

\begin{proof}
Restrict scalars to $R=k[H]$. Since $k[Q]$ is a free $R$-module on a set of coset
representatives, $U$ and $V$ are free $R$-modules. Let $\varepsilon\colon R\to k$ be the
augmentation, with kernel $I_H$. As $H$ is normal, $k[Q/H]\otimes_{k[Q]}U\cong U/I_HU
\cong k\otimes_RU$, and under these identifications $k[Q/H]\otimes f$ becomes
$k\otimes_Rf$. The ring $R$ is a commutative domain: every finite computation in $R$
takes place in the Laurent polynomial ring $k[t_1^{\pm1},\dots,t_s^{\pm1}]$ of a finitely
generated, hence free abelian, subgroup of~$H$.

\emph{Claim.} If $v_1,\dots,v_m\in V$ have $k$-linearly independent images
$\bar v_1,\dots,\bar v_m$ in $k\otimes_RV$, then they are $R$-linearly independent.

We induct on $m$, the case $m=0$ being empty. Since $\bar v_1\ne0$, some coordinate
functional $\lambda\colon V\to R$ with respect to an $R$-basis of $V$ satisfies
$\varepsilon(a)\ne0$, where $a:=\lambda(v_1)$; in particular $a\ne0$. Put
$w_j=av_j-\lambda(v_j)v_1$ for $2\le j\le m$. Their images
$\varepsilon(a)\bar v_j-\varepsilon(\lambda(v_j))\bar v_1$ are $k$-linearly independent,
because the $\bar v_j$ are and $\varepsilon(a)\ne0$ in the domain~$k$. By induction the
$w_j$ are $R$-linearly independent. If $\sum_jc_jv_j=0$ with $c_j\in R$, then, $R$
being commutative,
\[
0=a\sum_jc_jv_j-\lambda\Bigl(\sum_jc_jv_j\Bigr)v_1=\sum_{j\ge2}c_jw_j ,
\]
so $c_j=0$ for $j\ge2$. Then $c_1v_1=0$, and applying $\lambda$ gives $c_1a=0$, so $c_1=0$.

Now let $(e_\alpha)$ be an $R$-basis of $U$. The $\bar e_\alpha$ form a $k$-basis of
$k\otimes_RU$, so by injectivity of $k\otimes_Rf$ every finite subfamily of
$(\overline{f(e_\alpha)})$ is $k$-linearly independent. By the claim, every finite
subfamily of $(f(e_\alpha))$ is $R$-linearly independent, i.e.\ $f$ is injective.
\end{proof}

\subsection{Cycle tests}
\begin{definition}
A \emph{test} is a pair $(k,v)$ with $k\in\{\Z\}\cup\{\Fp:p\text{ prime}\}$ and
$v\in k[G]^{(J)}$. A normal subgroup $M\trianglelefteq G$ \emph{passes} the test
$(k,v)$ if $\partial_M(v_M)=0$ implies $v_M=0$; otherwise it \emph{fails} it. We call
$M$ \emph{good} if it passes every test.
\end{definition}
Since every element of $k[G/M]^{(J)}$ is of the form $v_M$, a normal subgroup $M$ is
good if and only if $\partial_M$ is injective over $\Z$ and over every $\Fp$, i.e.\
(Proposition~\ref{prop:fox}) $H_2(K_M;k)=0$ for all such~$k$.

\begin{lemma}\label{lem:pigeon}
Let $K=X_0\subset X_1\subset\cdots\subset X_m$ be a chain of two-complexes in which every
inclusion is zero on $\pi_2$, and let $M_r=\ker(G\to\pi_1X_r)$, so that
$1=M_0\subseteq M_1\subseteq\cdots\subseteq M_m$. For every test $(k,v)$ and
$0\le r<m$,
\[
\partial_{M_r}(v_{M_r})=0\ \Longrightarrow\ v_{M_{r+1}}=0 .
\]
Consequently each test is failed by at most one $M_r$, and any $m$ tests are all passed
by some $M_r$ with $0\le r\le m$.
\end{lemma}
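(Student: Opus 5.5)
We interpret a cycle on $K_{M_r}$ via the covering $\widetilde{X_r}$ of $X_r$. By Lemma~\ref{lem:components}, each component of the preimage of $K$ in $\widetilde{X_r}$ is a copy of $K_{M_r}$.

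Suppose $\partial_{M_r}(v_{M_r})=0$. Then $v_{M_r}$ is a $2$-cycle on $K_{M_r}$ with coefficients in $k$, so it defines a class $c\in H_2(K_{M_r};k)$. Fix the component $C\cong K_{M_r}$ of the preimage of $K$ in $\widetilde{X_r}$ that contains the chosen base lifts. Pushing $c$ forward gives a class in $H_2(\widetilde{X_r};k)$, which is represented by the same cellular $2$-cycle.

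We first show that $\pi_2(X_r)\otimes k\to H_2(\widetilde{X_r};k)$ is surjective. Since $\widetilde{X_r}$ is a simply connected two-complex, its $H_1$ vanishes, so by universal coefficients $H_2(\widetilde{X_r};k)=H_2(\widetilde{X_r})\otimes k$. By Hurewicz, $H_2(\widetilde{X_r})=\pi_2(X_r)$. So $c$ is represented by some element of $\pi_2(X_r)\otimes k$.

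Now pass to the next stage. The inclusion $X_r\subset X_{r+1}$ lifts to a cellular map $\widetilde{X_r}\to\widetilde{X_{r+1}}$. On $H_2$ of universal covers this map is the map on $\pi_2$, tensored with $k$, and that map is zero by hypothesis. Hence the image of $c$ in $H_2(\widetilde{X_{r+1}};k)$ vanishes.

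Since $\widetilde{X_{r+1}}$ has no $3$-cells, $H_2=Z_2$ there. So the image cellular chain is actually zero, not merely a boundary. That image chain lives on the component of the preimage of $K$ in $\widetilde{X_{r+1}}$, which is a copy of $K_{M_{r+1}}$. It is exactly $v_{M_{r+1}}$: the map $\widetilde{X_r}\to\widetilde{X_{r+1}}$ restricted to $C$ is the covering $K_{M_r}\to K_{M_{r+1}}$, which sends chosen lifts to chosen lifts. Cells of $K$ are cells of $X_{r+1}$, so the cellular chain map is injective on this component's chain group. Therefore $v_{M_{r+1}}=0$.

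The main obstacle is this identification of chains. It requires that a cellular chain supported on $C$ maps to the corresponding chain in $C_2(\widetilde{X_{r+1}})$, and that $C_2(K_{M_{r+1}})$ embeds there as a direct summand. Both hold because the lifts of $K$'s cells form part of a cell basis.

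For the consequences, suppose $M_r$ fails the test $(k,v)$. Then $\partial_{M_r}v_{M_r}=0$ but $v_{M_r}\ne0$, and the implication above gives $v_{M_{r+1}}=0$. Because $M_s\supseteq M_{r+1}$ for $s>r$, we get $v_{M_s}=0$ for all $s\ge r+1$. A subgroup $M_s$ with $v_{M_s}=0$ passes the test trivially, so no $M_s$ with $s>r$ fails it. Hence at most one $M_r$ fails each test.

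Finally, $m$ tests rule out at most $m$ of the $m+1$ indices $0,\dots,m$. By pigeonhole, some $M_r$ passes all $m$ tests.
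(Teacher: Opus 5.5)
Your proof is correct and follows the paper's argument essentially step for step. Both view $v_{M_r}$ as a $2$-cycle on the base component $K_{M_r}\subset\widetilde{X_r}$, use Hurewicz together with universal coefficients (since $H_1$ of a universal cover vanishes) to see that $H_2(\widetilde{X_r};k)\to H_2(\widetilde{X_{r+1}};k)$ is zero, conclude from the absence of $3$-cells that the image chain $v_{M_{r+1}}$ is zero, and finish with the same pigeonhole count. One wording point: the injectivity you need at the end is that of the inclusion $C_2(K_{M_{r+1}};k)\subset C_2(\widetilde{X_{r+1}};k)$, not of the covering map on chains, and your later remark about a direct summand says exactly this.
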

\begin{proof}
Fix $r<m$ and write $X=X_r$, $X'=X_{r+1}$, $M=M_r$, $M'=M_{r+1}$. By
Lemma~\ref{lem:components}, the components of the preimages of $K$ in $\widetilde X$
and $\widetilde X'$ through the base points are copies of $K_M$ and $K_{M'}$, and the
based lift $\widetilde X\to\widetilde X'$ of the inclusion restricts to the covering map
$K_M\to K_{M'}$. On $2$-chains, the latter is the reduction
$k[G/M]^{(J)}\to k[G/M']^{(J)}$.

Assume $\partial_M(v_M)=0$. Then $v_M$ is a $2$-cycle of $K_M\subset\widetilde X$, and
its image $v_{M'}$ is a $2$-cycle of $\widetilde X'$. The map
$H_2(\widetilde X;k)\to H_2(\widetilde X';k)$ is zero: for $k=\Z$ it is the map
$\pi_2(X)\to\pi_2(X')$; for $k=\Fp$, since $H_1(\widetilde X)=H_1(\widetilde X')=0$,
the universal coefficient theorem identifies it naturally with
$(\pi_2(X)\to\pi_2(X'))\otimes\Fp=0$. Thus $v_{M'}$ is a null-homologous $2$-cycle in a
complex without $3$-cells, so $v_{M'}=0$.

If a test fails at $r$, then $v_{M_{r+1}}=0$, hence $v_{M_s}=0$ for all $s>r$ (as
$M_{r+1}\subseteq M_s$), so it passes at every $s>r$. Applying this to the smallest
index where the test fails shows that it fails at most once. Since $m$ tests have at
most $m$ failures among the $m+1$ indices, some index is failed by none.
\end{proof}

\begin{lemma}\label{lem:compact}
The set $\mathcal G$ of good normal subgroups of $G$ is nonempty and has a minimal
element with respect to inclusion.
\end{lemma}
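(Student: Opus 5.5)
We work in the space $\mathcal N\subseteq\{0,1\}^G$ of normal subgroups of $G$, with the product (Chabauty) topology. It is compact, being a closed subset of a compact space: normality and the subgroup axioms are each conditions involving only finitely many coordinates. The plan is to express goodness as an intersection of closed sets with the finite intersection property, and then to get minimality from Zorn's lemma.

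\emph{Step 1: each set of passers is closed.} Fix a test $(k,v)$ and let $\mathcal P_{(k,v)}$ be the set of $M\in\mathcal N$ that pass it. We show that its complement, the set of failing $M$, is open. Since $v$ has finite support, $v_M=0$ and $\partial_M(v_M)=0$ are conditions on finitely many elements of $G$. Write $\partial_1v=\sum_i w_i e_i$ with finitely many $w_i\in k[G]$. Then $\partial_M(v_M)=0$ if and only if, for each $i$, the coefficients of $w_i$ summed over each $M$-coset vanish. Only the finitely many differences $g^{-1}h$ of elements $g,h$ in the supports matter, and the partition of a finite set $S\subset G$ into $M$-cosets is determined by which of these differences lie in $M$. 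Hence the pair of truth values ``$\partial_M(v_M)=0$'' and ``$v_M=0$'' is locally constant in $M$. So both the set of passers and the set of failers are clopen, and in particular $\mathcal P_{(k,v)}$ is closed.

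\emph{Step 2: nonemptiness.} By (1), for every $m$ there is a chain of length $m$ starting at $K$. Lemma~\ref{lem:pigeon} then shows that any $m$ tests are passed simultaneously by some $M_r\in\mathcal N$. So the family $\{\mathcal P_{(k,v)}\}$ has the finite intersection property, and compactness gives $\mathcal G=\bigcap\mathcal P_{(k,v)}\ne\emptyset$. Note that $\mathcal G$ is closed in $\mathcal N$.

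\emph{Step 3: minimal element.} We apply Zorn's lemma to $(\mathcal G,\supseteq)$. Let $(N_\alpha)$ be a chain in $\mathcal G$ and put $N=\bigcap_\alpha N_\alpha$, which is normal. In $\{0,1\}^G$ the decreasing net $N_\alpha$ converges to $N$ coordinatewise: for each $g\notin N$ we have $g\notin N_\alpha$ for all sufficiently small $\alpha$, because the family is totally ordered. Since $\mathcal G$ is closed, $N\in\mathcal G$. So every chain has a lower bound, and Zorn's lemma yields a minimal good normal subgroup.

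The main obstacle is the local constancy in Step 1, namely checking carefully that reducing modulo $M$ only involves finitely many membership conditions. The rest is formal.
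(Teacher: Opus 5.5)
Your proposal is correct and follows essentially the same route as the paper. Passing a test depends only on $M\cap S^{-1}S$ for a finite $S$, so each set of passers is clopen. Lemma~\ref{lem:pigeon} gives the finite intersection property, compactness of $\mathcal N$ gives $\mathcal G\neq\varnothing$, and Zorn's lemma applies because the intersection of a chain is its coordinatewise limit and $\mathcal G$ is closed.
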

\begin{proof}
Identify subsets of $G$ with points of the compact space $\{0,1\}^G$; the normal
subgroups form a closed subset $\mathcal N$. Fix a test $(k,v)$, and let $S\subset G$ be
a finite set containing the supports of all coordinates of $v$ and of $\partial_1v$.
Since $v_M$ and $\partial_M(v_M)=(\partial_1v)_M$ are obtained by summing coefficients
over left cosets $sM$, and $sM=s'M$ if and only if $s^{-1}s'\in M$, whether $M$ passes
$(k,v)$ depends only on $M\cap S^{-1}S$. Hence the set $\mathcal N_{(k,v)}$ of normal
subgroups passing $(k,v)$ is clopen in $\mathcal N$. By hypothesis~(1) and
Lemma~\ref{lem:pigeon}, applied to a chain of length $m$, any $m$ of the sets
$\mathcal N_{(k,v)}$ have a common point. By compactness,
$\mathcal G=\bigcap_{(k,v)}\mathcal N_{(k,v)}$ is nonempty, and it is closed.

If $\mathcal C\subseteq\mathcal G$ is totally ordered by inclusion, then
$\bigcap\mathcal C$ is the limit in $\{0,1\}^G$ of the net $\mathcal C$, ordered by
reverse inclusion: for each $g\in G$, membership of $g$ is eventually constant and equal
to membership in $\bigcap\mathcal C$. Hence $\bigcap\mathcal C\in\mathcal G$, and Zorn's
lemma gives a minimal element.
\end{proof}

\begin{proof}[Proof of $(1)\Rightarrow(2)$]
Let $N$ be a minimal element of $\mathcal G$. Then $H_2(K_N;k)=0$ for $k=\Z$ and every
$k=\Fp$. By the universal coefficient theorem,
$\mathrm{Tor}(H_1(K_N),\Fp)\subseteq H_2(K_N;\Fp)=0$ for every $p$, so
$H_1(K_N)=N/[N,N]$ is torsion-free.

Apply Lemma~\ref{lem:coinv} with $Q=G/[N,N]$, $H=N/[N,N]$ and $f=\partial_{[N,N]}$ over
$k$. Since $C_*(K_N;k)=k[G/N]\otimes_{k[Q]}C_*(K_{[N,N]};k)$, we have
$k[Q/H]\otimes_{k[Q]}f=\partial_N$, which is injective. Hence $\partial_{[N,N]}$ is
injective over every $k$, i.e.\ $[N,N]\in\mathcal G$. Minimality gives $[N,N]=N$. Thus
$H_1(K_N)=0$ and $H_2(K_N)=0$, and $K_N$ is a connected acyclic regular cover of~$K$.
\end{proof}

\section{Reduction to the acyclic cover}\label{sec:descent}

Let $p\colon D\to K$ be a connected acyclic regular cover, $G=\pi_1(K)$,
$N=p_*\pi_1(D)$ and $H=G/N$. Section~\ref{sec:acyclic} builds chains over $D$ in which
$\pi_1(D)$ dies immediately. This section transfers them to~$K$.

\begin{proposition}[Descent]\label{prop:descent}
Let $D=L_0\subsetneq L_1\subsetneq\cdots\subsetneq L_n$ be two-complexes with all
inclusions zero on $\pi_2$, such that $\pi_1(D)\to\pi_1(L_i)$ is trivial for $i\ge1$.
Let $E_i=K\cup_pL_i$ be the pushout of $K\xleftarrow{p}D\subset L_i$, i.e.\ $K$
together with the cells of $L_i\smallsetminus D$, attached by composing their attaching
maps with $p$ on~$D$. Then $K=E_0\subsetneq E_1\subsetneq\cdots\subsetneq E_n$ are
two-complexes and all inclusions are zero on $\pi_2$. If $K$ and all $L_i$ are finite,
so are all~$E_i$.
\end{proposition}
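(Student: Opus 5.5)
The plan is to first settle the routine points and then reduce the $\pi_2$ statement to Lemma~\ref{lem:toolbox}(c). Since $L_i\subsetneq L_{i+1}$ and $E_{i+1}\smallsetminus E_i$ is in bijection with $L_{i+1}\smallsetminus L_i$, the inclusions $E_i\subsetneq E_{i+1}$ are strict. Each $E_i$ is a CW-complex of dimension at most $2$, because $p$ is cellular. It is connected because $K$ is connected and every new cell meets $L_i$, which is connected. Finiteness is clear, since the cells of $E_i$ are those of $K$ together with those of $L_i\smallsetminus D$. Note also that $E_i$ is the pushout $K\cup_D L_i$ along the covering map $p$, and $E_{i+1}=E_i\cup_{L_i}L_{i+1}$.

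\emph{Step 1: generating $\pi_2(E_i)$.} For $i\ge1$ I would compute the universal cover of $E_i$. By van Kampen applied to the pushout (or via the mapping cylinder of $p$), $\pi_1(E_i)$ is the quotient of $\pi_1(K)*_{\pi_1D}\pi_1(L_i)$. Because $\pi_1(D)\to\pi_1(L_i)$ is trivial, this is $\bigl(G/\langle\langle N\rangle\rangle\bigr)*\pi_1(L_i)$, which equals $H*\pi_1(L_i)$ since $N$ is normal. Consider the preimage of $K$ in $\widetilde E_i$. Its components are copies of $K_M$ with $M=\ker(G\to\pi_1E_i)=N$ (Lemma~\ref{lem:components}), so they are copies of $D$. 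Each such copy of $D$ has copies of $\widetilde L_i$ glued along preimages. More precisely, the preimage of $L_i$ consists of copies of the cover of $L_i$ given by $\ker(\pi_1L_i\to\pi_1E_i)$, and this kernel is trivial because $\pi_1(L_i)$ injects into the free product. So $\widetilde E_i$ is a tree-like union of copies of $D$ and copies of $\widetilde L_i$, where each copy of $\widetilde L_i$ meets the $D$-pieces in the disjoint lifts of $D$ it contains. These lifts are copies of $D$ because $\pi_1D$ dies in $L_i$.

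A Mayer--Vietoris argument over this tree of spaces then applies. The $D$-pieces are acyclic, so $H_1$ and $H_2$ of the intersections vanish, and the intersection pattern is a tree. Hence $H_2(\widetilde E_i)=\bigoplus H_2(\text{copies of }\widetilde L_i)$. In other words, $\pi_2(E_i)=\Z[\pi_1E_i]\cdot j_*\pi_2(L_i)$, where $j\colon L_i\to E_i$ is the natural map. For $i=0$ I would use instead $\pi_2(K)=H_2(\widetilde K)$. Here $\widetilde K=\widetilde D$, and $\pi_2(K)=p_*\pi_2(D)$, because $p$ induces an isomorphism on $\pi_2$.

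\emph{Step 2: applying Lemma~\ref{lem:toolbox}(c).} Take $f=j\colon L_i\to E_i$ (and $f=p$ when $i=0$), and let $g\colon E_i\to E_{i+1}$ be the inclusion. Then $g\circ j$ equals the composite $L_i\subset L_{i+1}\to E_{i+1}$, and the first map is zero on $\pi_2$ by hypothesis. Therefore $g_*f_*=0$ on $\pi_2(L_i)$, and Lemma~\ref{lem:toolbox}(c) gives that $g$ is zero on $\pi_2$. The case $i=0$ works the same way, using $D=L_0\subset L_1$.

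The main obstacle is Step~1: making the tree-of-spaces Mayer--Vietoris rigorous for possibly infinite complexes, and checking the identification of the lifts. If the tree argument proves awkward, my fallback is to compute $H_2(\widetilde E_i)$ by an excision and long exact sequence argument relative to the preimage of $K$, which is a disjoint union of acyclic copies of $D$. This gives $H_2(\widetilde E_i)\cong H_2(\widetilde E_i,\text{preimage of }K)$. That group is built from the relative groups $H_2(\widetilde L_i,\text{lifts of }D)\cong H_2(\widetilde L_i)$, using once more that the lifts of $D$ are acyclic.
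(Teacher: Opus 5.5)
Your proposal is correct and follows essentially the same route as the paper. You compute $\pi_1(E_i)=H*\pi_1(L_i)$ by van Kampen, and decompose $\widetilde E_i$ into copies of $D$ and of $\widetilde L_i$ meeting in lifts of $D$. You then use $H_1(D)=H_2(D)=0$ in Mayer--Vietoris to get $\pi_2(E_i)=\Z[\pi_1E_i]\cdot\operatorname{im}\pi_2(L_i)$ (with $\pi_2(K)=p_*\pi_2(D)$ for $i=0$), and conclude with Lemma~\ref{lem:toolbox}(c).

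The obstacle you anticipate does not arise in the paper's version. Writing $\Lambda=\pi_1(E_i)$ and $\Pi=\pi_1(L_i)$, the paper uses the short exact sequence of cellular chain complexes $0\to\bigoplus_{\Lambda}C_*(D)\to\bigoplus_{\Lambda/H}C_*(D)\oplus\bigoplus_{\Lambda/\Pi}C_*(\widetilde L_i)\to C_*(\widetilde E_i)\to0$. This holds verbatim for infinite complexes, and the resulting isomorphism on $H_2$ uses only the acyclicity of $D$, not the tree structure.
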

\begin{proof}
\emph{Cells and $\pi_1$.} The cells of $E_i$ are those of $K$ and those of
$L_i\smallsetminus D$, so the $E_i$ are strictly nested two-complexes. Put
$\Pi_i=\pi_1(L_i)$. By van Kampen's theorem $\pi_1(E_i)$ is the pushout of
$G\leftarrow\pi_1(D)\rightarrow\Pi_i$. The first map has normal image $N$ and the second
is trivial, so
\[
\Lambda_i:=\pi_1(E_i)=(G/N)*\Pi_i=H*\Pi_i .
\]

\emph{$\pi_2$ is generated from $L_i$.} Write $\Lambda=\Lambda_i$ and $\Pi=\Pi_i$. By
Lemma~\ref{lem:components}, in $\widetilde E_i$: the preimage of $K$ is a disjoint union
of copies of $K_N=D$ indexed by $\Lambda/H$ (the kernel of $G\to\Lambda$ is $N$, since
$H$ embeds in $H*\Pi$); the preimage of $L_i$ is a disjoint union of copies of
$\widetilde L_i$ indexed by $\Lambda/\Pi$; and the preimage of $D$ is a disjoint union
of copies of $D$ indexed by $\Lambda$, as $\pi_1(D)$ dies in $\Lambda$. These are
subcomplexes, the first two cover $\widetilde E_i$ and intersect in the third, so we
have a short exact sequence of cellular chain complexes
\[
0\to\bigoplus_{\Lambda}C_*(D)\to\Bigl(\bigoplus_{\Lambda/H}C_*(D)\Bigr)\oplus
\Bigl(\bigoplus_{\Lambda/\Pi}C_*(\widetilde L_i)\Bigr)\to C_*(\widetilde E_i)\to0 .
\]
Since $H_1(D)=H_2(D)=0$, the long exact homology sequence gives an isomorphism
$\bigoplus_{\Lambda/\Pi}H_2(\widetilde L_i)\to H_2(\widetilde E_i)$, that is,
\begin{equation}\label{eq:descent-pi2}
\Z[\Lambda]\otimes_{\Z[\Pi]}\pi_2(L_i)\ \xrightarrow{\ \cong\ }\ \pi_2(E_i),
\end{equation}
induced by $L_i\to E_i$. In particular
$\pi_2(E_i)=\Z[\Lambda]\cdot\operatorname{im}\pi_2(L_i)$.

\emph{Zero maps.} The covering $p$ is bijective on $\pi_2$, and the composite
$D\xrightarrow{p}K\subset E_1$ equals $D\subset L_1\to E_1$, which is zero on $\pi_2$;
hence $K\subset E_1$ is zero on $\pi_2$. For $i\ge1$, the square formed by
$L_i\subset L_{i+1}$ and $E_i\subset E_{i+1}$ commutes, its top arrow is zero on
$\pi_2$, and $\pi_2(E_i)$ is generated by the image of $\pi_2(L_i)$; so
$E_i\subset E_{i+1}$ is zero on $\pi_2$ by Lemma~\ref{lem:toolbox}(c).
\end{proof}

\begin{proof}[Proof of $(2)\Rightarrow(1)$, given Proposition~\ref{prop:relative}]
Apply Proposition~\ref{prop:relative} to the acyclic cover $D$ and then
Proposition~\ref{prop:descent}. If $K$ is finite and acyclic, take $D=K$ and $p$ the
identity; then $E_i=L_i$, which are finite by Proposition~\ref{prop:relative}.
\end{proof}

\section{The chain over an acyclic complex}\label{sec:acyclic}

\begin{proposition}\label{prop:relative}
Let $D$ be an acyclic two-complex and $n\ge1$. There are two-complexes
\[
D=L_0\subsetneq L_1\subsetneq\cdots\subsetneq L_n
\]
with all inclusions zero on $\pi_2$ and $\pi_1(D)\to\pi_1(L_i)$ trivial for $i\ge1$.
If $D$ is finite, all $L_i$ can be chosen finite.
\end{proposition}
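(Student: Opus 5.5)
The plan is to build the chain by an inductive ``commutator deepening'' procedure. The first task is to identify the right invariant. Lemma~\ref{lem:toolbox}(b) alone does not suffice. If $X$ is acyclic, hence Cockcroft, and $Y\supsetneq X$ is any two-complex in which $\pi_1(X)$ dies, then $X\subset Y$ is automatically zero on $\pi_2$. So if we could always choose such a $Y$ again acyclic, we would get an infinite chain, i.e.\ a counterexample to Whitehead's question. We therefore cannot hope to keep the $L_i$ acyclic. Instead we aim for an invariant that ``wears out'' after $n$ steps.

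Concretely, by Remark~\ref{rem:fox} we may assume that $D$ is a presentation complex $\pres{x_i\mid r_j}$. The $i$-th step, $L_{i-1}\subset L_i$, will be an \emph{operation} on presentations. Each current generator $x$ is rewritten as a commutator $[a_x,b_x]$ of new generators. Each relator is replaced by a \emph{block}: a finite acyclic, aspherical (or at least Cockcroft) two-complex $B$ with two distinguished loops $a,b$, glued along $x\mapsto[a,b]$. The properties I would demand of blocks, playing the role of \ref{B1}--\ref{B3}, are:
\begin{itemize}
\item[(B1)] $B$ is finite, acyclic and aspherical.
\item[(B2)] The distinguished loops generate a free subgroup carried by a $\pi_1$-injective subcomplex.
\item[(B3)] Gluing blocks along such subcomplexes produces a complex $X$ with $\pi_2(X)=\Z[\pi_1X]\cdot(\text{image of }\pi_2(L_{i-1}))$, computed by a Mayer--Vietoris argument in $\widetilde X$ exactly as in the proof of Proposition~\ref{prop:descent}.
\end{itemize}
Granting these, each $L_i$ is Cockcroft by Lemma~\ref{lem:toolbox}(d): it maps to a graph-of-blocks that is aspherical and is injective on $H_2$. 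Each inclusion $L_i\subset L_{i+1}$ is zero on $\pi_2$ by Lemma~\ref{lem:toolbox}(c): $\pi_2(L_{i+1})$ is generated by images from $L_i$, and the composite from $L_{i-1}$ is zero. The base case $L_0=D\subset L_1$ is zero on $\pi_2$ by Lemma~\ref{lem:toolbox}(b), because the first operation already makes every generator of $D$ die in $\pi_1(L_1)$.

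The invariant I would carry through the induction is that the image of $\pi_1(D)$ in $\pi_1(L_i)$ lies in the $(n-i)$-th term of a derived-series-like filtration. Equivalently, the generators of $D$ become $(n-i)$-fold iterated commutators inside acyclic blocks. This is what permits $n$ steps but not infinitely many. To achieve it, I would in fact build the chain backwards: first iterate the operation $n$ times on $D$ to get the nested complexes, then append a \emph{terminal extension} $L_n$. The terminal extension attaches $2$-cells killing the remaining block generators, so that $L_{n-1}$ becomes Cockcroft-to-zero in $L_n$. It uses only Lemma~\ref{lem:toolbox}(a)/(b), since $L_{n-1}$ is Cockcroft. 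Finiteness is automatic: if $D$ is finite, each operation adds finitely many finite blocks.

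I expect the main obstacle to be the construction of blocks satisfying (B1)--(B3). In particular, the delicate point is the $\pi_1$-injectivity and asphericity needed so that gluing does not create new spherical classes. I would first try Davis's reflection-group trick applied to a truncated non-positively curved cube complex, following Davis--Januszkiewicz--Weinberger. This should produce a finite aspherical acyclic complex (acyclicity coming from an acyclic nerve). Its locally convex subcomplexes, being themselves non-positively curved cube complexes, provide the $\pi_1$-injective pieces carrying free subgroups in which a prescribed loop is a commutator. The gluing step (B3) should then follow from the Mayer--Vietoris computation in the universal cover, as in Proposition~\ref{prop:descent}, using acyclicity of the blocks to kill the edge terms.
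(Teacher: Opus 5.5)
\textbf{There is a genuine gap in the inductive mechanism, independent of how the blocks are built.}

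Your (B3) asks that $\pi_2(L_i)=\Z[\pi_1L_i]\cdot\operatorname{im}\pi_2(L_{i-1})$ along the \emph{chain inclusion}. You also want $L_{i-1}\subset L_i$ to be zero on $\pi_2$. Together these force $\pi_2(L_i)=0$. Already for $i=1$, where Lemma~\ref{lem:toolbox}(b) gives you the zero map, $L_1$ would be an aspherical complex containing $D$. That is a counterexample to Whitehead's question whenever $\pi_2(D)\ne0$ (e.g.\ Corollary~B), which is exactly what your first paragraph says one cannot expect. Your appeal to Lemma~\ref{lem:toolbox}(c) is circular for the same reason.

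The Mayer--Vietoris argument ``as in Proposition~\ref{prop:descent}'' has a similar problem. It identifies the relevant pieces with copies of $\widetilde{L_{i-1}}$ only if $\pi_1(L_{i-1})\to\pi_1(L_i)$ is injective. But then $\widetilde{L_{i-1}}$ is a subcomplex of $\widetilde{L_i}$, so $\pi_2(L_{i-1})\to\pi_2(L_i)$ is injective, not zero. In Proposition~\ref{prop:descent} the non-injective piece is the \emph{acyclic} $D$, which is what makes the computation work.

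There are also consistency problems. An operation that \emph{replaces} relators by blocks does not make $L_{i-1}$ a subcomplex of $L_i$. Iterating it on $D$ does not give a nested chain starting at $D$. And the ``derived-series'' invariant is vacuous once $\pi_1(D)$ dies in $L_1$.

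\textbf{The missing idea.} The generation property should be attached to the \emph{structural map} $\eta_P\colon P\to T(P)$ of a natural operation $T$. You then apply $T$ to \emph{every term of an already constructed chain}, rather than letting one application of the operation be one step of the chain. In the paper, $T$ does the following:
\begin{enumerate}
\item it fixes the core, so $T(D)=D$;
\item it turns each extra generator $z$ into $[a_z,b_z]$;
\item it corrects each extra relator by core relators via $\Theta^{-1}$, which is where acyclicity of $D$ makes the relator a product of commutators;
\item it replaces each corrected relator by a block.
\end{enumerate}
Property \ref{B2} gives $\pi_2(T(P))=\Z[G(T(P))]\cdot\eta_{P*}\pi_2(P)$. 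Naturality $T(j)\eta_P=\eta_{P'}j$ then transports zero maps: $T(j)_*\eta_{P*}=\eta_{P'*}j_*=0$ forces $T(j)_*=0$.

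The chain $(D,L_1,\dots,L_m)$ is lengthened to $(D,T(L_1),\dots,T(L_m),\Tp(L_m))$. The new inclusion $T(L_m)\subset\Tp(L_m)$ is zero on $\pi_2$ by Lemma~\ref{lem:toolbox}(b),(c). Crucially, $\Tp(L_m)$ must again be Cockcroft for the next pass. This uses $H_2(\Tp(P))\cong H_2(\Tp(P)/D)$ (acyclicity of $D$) together with \ref{B3}.

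So what the blocks need is generation under relator replacement and Cockcroftness after capping. Acyclicity or asphericity of the block itself plays no role. Property \ref{B2} is supplied by the coning argument for the truncated CAT(0) cube complex (Lemma~\ref{lem:geometric-pushout}). Your terminal step via Lemma~\ref{lem:toolbox}(a),(b) handles only a single final inclusion. It gives no way to keep the earlier inclusions zero on $\pi_2$ while the chain grows.
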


\subsection{Presentations extending \texorpdfstring{$D$}{D}}
Fix a maximal tree $\mathcal T_D$ of the $1$-skeleton of $D$ and a base vertex.
Collapsing $\mathcal T_D$ gives a presentation $\pres{x_i\ (i\in I)\mid r_j\ (j\in J)}$,
where the $x_i$ are the edges of $D$ outside $\mathcal T_D$; we fix a total order on
$I$. Every word in the $x_i$ determines a based edge loop in $D$, going through
$\mathcal T_D$. A \emph{presentation extending $D$} is a complex $P\supseteq D$ obtained
by attaching loops at the base vertex (\emph{extra generators}) and $2$-cells along words
in the $x_i$ and the extra generators (\emph{extra relators}). The $x_i$ and $r_j$ are
the \emph{core} generators and relators. Relators are \emph{labelled}: equal words with
different labels are different $2$-cells. We write $G(P)=\pi_1(P)$, and $\cx{\mathcal P}$
for the presentation complex of a presentation $\mathcal P$. Since
$P\simeq P/\mathcal T_D$, which is the presentation complex of the corresponding
presentation, we freely pass between the two.

Since $D$ is acyclic and $D/\mathcal T_D$ has a single vertex, its cellular boundary
(Remark~\ref{rem:fox})
\begin{equation}\label{eq:exponent}
\Theta\colon\Z^{(J)}\to\Z^{(I)},\qquad \Theta_{ij}=\text{exponent sum of }x_i\text{ in }r_j,
\end{equation}
is an isomorphism, since its kernel is $H_2(D)$ and its cokernel is $H_1(D)$. Note that
$\Theta^{-1}$ maps finitely supported vectors to finitely supported vectors.

If $I=\varnothing$, then $D/\mathcal T_D$ is a wedge of $|J|$ spheres, so $J=\varnothing$
since $H_2(D)=0$, and $D$ is contractible. We then take for $L_i$ the complex $D$ with
$i$ pairs $(e^1,e^2)$ attached, $e^2$ along the loop $e^1$; all $\pi_2$ vanish.
Henceforth $I\ne\varnothing$.

\subsection*{Outline}
We write $[u,v]=uvu^{-1}v^{-1}$. The chain is built by induction on~$n$. We start from a
pair $D\subset Y_D$ that kills $\pi_1(D)$ (Lemma~\ref{lem:initial}). Given a chain
$D=L_0\subset\cdots\subset L_m$, we apply to each term an operation $T$ that turns every
extra generator $z$ into a commutator $[a_z,b_z]$ of two new generators, corrects every
extra relator by core relators so that it becomes a product of commutators, and then
replaces it by a \emph{block}. By the defining property \ref{B2} of blocks, $\pi_2$ of
$T(P)$ is generated by the image of $\pi_2(P)$, so $T$ preserves inclusions that are zero
on $\pi_2$ (Lemma~\ref{lem:structural}). We then append the \emph{terminal extension}
$\Tp(L_m)$, obtained by capping all generators $a_z,b_z$. It kills $\pi_1(L_m)$, and its
second homology comes from ``capped blocks'', which are Cockcroft by \ref{B3}
(Lemma~\ref{lem:terminal}). Each such pass lengthens the chain by one.

\subsection{The initial pair}
Define
\begin{equation}\label{eq:initial}
\begin{split}
Y_D=\pres{x_i,a_i,b_i\ (i\in I)\mid\ &r_j\ (j\in J),\quad x_i[a_i,b_i]^{-1}\ (i\in I),\\
&[a_i,a_k],\,[a_i,b_k],\,[b_i,a_k],\,[b_i,b_k]\ (i<k)},
\end{split}
\end{equation}
a presentation extending $D$.
\begin{lemma}\label{lem:initial}
$G(Y_D)$ is free abelian on the $a_i,b_i$, and $x_i=1$ in it. The complex $Y_D$
is Cockcroft, and $D\subset Y_D$ is trivial on $\pi_1$ and zero on $\pi_2$.
\end{lemma}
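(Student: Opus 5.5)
We compute $G(Y_D)$ directly from the presentation~\eqref{eq:initial}. The relators $x_i[a_i,b_i]^{-1}$ let us eliminate each $x_i$ by Tietze moves, replacing it by $[a_i,b_i]$. After elimination, the commutation relators make all $a_i,b_i$ commute, so every $[a_i,b_i]$ becomes trivial. Since $[a_i,b_i]$ is trivial, each $x_i=1$ in $G(Y_D)$. Every $r_j$ is a word in the $x_i$, so it becomes trivial as well. Hence $G(Y_D)$ is the free abelian group $A$ on the $a_i,b_i$, and all $x_i$ die. Because $\pi_1(D)$ is generated by the $x_i$ (via the collapsed tree $\mathcal T_D$), the map $D\subset Y_D$ is trivial on $\pi_1$.

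Next we show $Y_D$ is Cockcroft, using the equivalent condition that $H_2(Y_D)\to H_2(A)$ is an isomorphism (Hopf). We compute $H_2(Y_D)=\ker\partial_2$ with the exponent-sum matrix. The $2$-cells fall into three types:
\begin{itemize}
\item the core relators $r_j$, with boundary $\Theta e_j$, lying in the $x$-coordinates;
\item the cells $x_i[a_i,b_i]^{-1}$, with boundary $e_{x_i}$;
\item the commutator cells, with boundary $0$.
\end{itemize}
A $2$-chain $\sum c_jr_j+\sum d_i\,(x_i[a_i,b_i]^{-1})+(\text{commutators})$ is therefore a cycle iff $\Theta c+d=0$, i.e.\ $d=-\Theta c$. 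So $H_2(Y_D)$ is free abelian on the commutator cells together with the classes $z_j=r_j-\sum_i\Theta_{ij}\,(x_i[a_i,b_i]^{-1})$.

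The intended route is to compare with the standard complex of $A$. The subcomplex $T$ consisting of the $a_i,b_i$ and the commutator cells $[a_i,a_k],[a_i,b_k],[b_i,a_k],[b_i,b_k]$ ($i<k$), together with the missing $[a_i,b_i]$, is the $2$-skeleton of the product of circles, i.e.\ of a $K(A,1)$. In $Y_D$ we have all commutator cells except $[a_i,b_i]$, but each $[a_i,b_i]$ is represented by a combination of cells. So the plan is to construct a map $Y_D\to T^{(2)}\subset K(A,1)$:
\begin{itemize}
\item send $x_i$ to the loop $[a_i,b_i]$;
\item send the cell $x_i[a_i,b_i]^{-1}$ to a null-homotopy of $[a_i,b_i][a_i,b_i]^{-1}$ (degree $0$);
\item send the commutator cells identically;
\item send each $r_j$, whose image is a word in the $[a_i,b_i]$ and hence null-homotopic in $K(A,1)$, to some null-homotopy in the $2$-skeleton.
\end{itemize}

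The main obstacle is the $r_j$, since the map on $H_2$ depends on the chosen null-homotopies. I would avoid this by choosing instead a map to $K(A,1)$ that is an honest cellular map into the $2$-skeleton of a torus. The $2$-skeleton contains the cells $[a_i,b_i]$, so the exponent of the $[a_i,b_i]$ cell in the image of $r_j$ can be taken to be $\Theta_{ij}$ (the exponent sum). Then the image of $z_j$ has $[a_i,b_i]$-coefficient $\Theta_{ij}-\Theta_{ij}=0$, up to a multiple of a $3$-cell boundary. This suggests that $H_2(Y_D)\to H_2(A)$ sends $z_j$ into the span of the other commutator classes.

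Rather than fight with this, the cleaner argument is Lemma~\ref{lem:toolbox}(d) combined with direct computation: the images of the commutator cells $[a_i,a_k],[a_i,b_k],\ldots$ with $i<k$ are basis elements of $H_2(A)=\Lambda^2A$, missing exactly the $a_i\wedge b_i$. Hence injectivity of $H_2(Y_D)\to H_2(A)$ can fail only on the $z_j$. Surjectivity fails on the $a_i\wedge b_i$ unless the $z_j$ hit them, and that depends on $\Theta$. The $z_j$ form a rank-$|J|=|I|$ family, and I expect (via the Fox-calculus image) that $z_j\mapsto\sum_i\Theta_{ij}\,(a_i\wedge b_i)$ up to sign. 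Since $\Theta$ is invertible, this gives an isomorphism, so $Y_D$ is Cockcroft. Verifying this formula is the step I expect to be delicate.

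Finally, zero on $\pi_2$: $D$ is acyclic, hence Cockcroft, and $D\to Y_D$ is trivial on $\pi_1$, so Lemma~\ref{lem:toolbox}(b) applies.
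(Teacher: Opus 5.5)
There is a genuine error in your computation of $G(Y_D)$. The relators of $Y_D$ contain only the cross-commutators with $i<k$. There is no relator $[a_i,b_i]$, as you observe yourself later. So after eliminating $x_i=U_i:=[a_i,b_i]$, the commutation relators only make the subgroups $\pres{a_i,b_i}$ commute with one another, and hence make the $U_i$ commute; they do not kill $U_i$.

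That $U_i=1$ must come from the core relators and the acyclicity of $D$. Modulo the cross-commutators, $r_j$ becomes $\prod_iU_i^{\Theta_{ij}}$. For $c=\Theta^{-1}e_i$ one then gets $U_i=\prod_jr_j(U)^{c_j}=1$. Your sentence ``every $r_j$ \dots\ becomes trivial as well'' runs the logic backwards: the $r_j$ are what force $x_i=1$, not a consequence of it. Your argument never uses that $\Theta$ is invertible, and without it the claim is false: for $D$ a circle, $G(Y_D)$ is free on $a_1,b_1$. Once $U_i=1$ is established, the converse (sending $a_i,b_i$ to a basis and $x_i$ to $0$ kills every relator) identifies the group as free abelian.

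The Cockcroft part is not actually proved. Your cellular computation of $H_2(Y_D)$ is correct: it is free on the cross-commutator cells and the $z_j$. Comparing with $H_2(A)$ through a map to a torus and then applying Lemma~\ref{lem:toolbox}(d) is also the paper's strategy. But the decisive formula $z_j\mapsto\sum_i\Theta_{ij}\,a_i\wedge b_i$ is only ``expected'', and your own intermediate computation contradicts it. You stipulated that the cell $x_i[a_i,b_i]^{-1}$ goes to a degree-$0$ disk, so there is nothing to subtract. If the $[a_i,b_i]$-coefficient really were $\Theta_{ij}-\Theta_{ij}=0$, the map on $H_2$ would fail to be injective and $Y_D$ would not be Cockcroft.

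Your worry about the choice of null-homotopies disappears if you map into the whole torus $\mathbb T$ rather than its $2$-skeleton. Here $\mathbb T$ is the product of the circles $a_i,b_i$, or the union of its finite subtori if $I$ is infinite. It is a $K(A,1)$, so the map is determined up to homotopy by its effect on $\pi_1$. Its cellular boundaries all vanish, so $H_2(\mathbb T)$ is free on the squares.

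Concretely, make the following assignments:
\begin{itemize}
\item send $x_i$ to the edge path $a_ib_ia_i^{-1}b_i^{-1}$;
\item send the cell $x_i[a_i,b_i]^{-1}$ into the $1$-skeleton;
\item send each cross-commutator cell onto its square;
\item send the disk of $r_j$ onto the signed concatenation of $(a_i,b_i)$-squares.
\end{itemize}
Then $z_j\mapsto\sum_i\Theta_{ij}[(a_i,b_i)\text{-square}]$, while the cross-commutator cells go to the remaining basis squares. Injectivity on $H_2$ follows from injectivity of $\Theta$, and Lemma~\ref{lem:toolbox}(d) gives the Cockcroft property. The paper does exactly this after Tietze-eliminating the $x_i$, which makes every $2$-cell a cycle. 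Your final step, that $D\subset Y_D$ is zero on $\pi_2$ via Lemma~\ref{lem:toolbox}(b), is fine.
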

\begin{proof}
\emph{The group.} Eliminate $x_i=U_i:=[a_i,b_i]$. The cross-commutators make the
subgroups $\pres{a_i,b_i}$ commute pairwise, so the $U_i$ commute, and $r_j$ becomes
$r_j(U)=\prod_iU_i^{\Theta_{ij}}$. For $c=\Theta^{-1}e_i\in\Z^{(J)}$ we get
\[
U_i=\prod_jr_j(U)^{c_j}=1 .
\]
So $x_i=1$ and all $a_i,b_i$ commute. Conversely, sending $a_i,b_i$ to a basis of the
free abelian group on $\{a_i,b_i\}$ and $x_i$ to $0$ satisfies every relator.

\emph{Cockcroft.} Eliminating the $x_i$ by the relators $x_iU_i^{-1}$ is a Tietze
transformation, hence a homotopy equivalence \cite[Ch.~I]{HMS}, from $Y_D/\mathcal T_D$
to the presentation complex $Y'$ with generators $a_i,b_i$ and relators $r_j(U)$ and the
cross-commutators. All these relators have zero exponent sums, so the cellular boundary
$\partial_2$ of $Y'$ vanishes and $H_2(Y')$ is free on the $2$-cells. Let $\mathbb T$ be
the union of the finite subtori of $\prod_{i\in I}(S^1_{a_i}\times S^1_{b_i})$, with its
product cell structure. It is a $K(\Z^{(2I)},1)$, and $H_2(\mathbb T)$ is free on the
coordinate squares. Map $Y'\to\mathbb T$ by sending generators to the corresponding
circles, each cross-commutator disk onto its square with degree one, and the disk of
$r_j(U)$, whose boundary is a concatenation of loops $U_i^{\pm1}$, onto the
concatenation of the corresponding $(a_i,b_i)$-squares. On $H_2$:
\[
[r_j(U)]\longmapsto\sum_i\Theta_{ij}\,[(a_i,b_i)\text{-square}],
\qquad[\text{cross-commutator}]\longmapsto[\text{its square}].
\]
A class in the kernel has zero cross-commutator coefficients, and its vector
$\lambda\in\Z^{(J)}$ of remaining coefficients satisfies $\Theta\lambda=0$, so
$\lambda=0$. Thus $H_2(Y')\to H_2(\mathbb T)$ is injective, and $Y'\simeq Y_D$ is
Cockcroft by Lemma~\ref{lem:toolbox}(d).

\emph{The inclusion.} $D\subset Y_D$ is trivial on $\pi_1$, as $\pi_1(D)$ is generated by
the $x_i$, and $D$ is Cockcroft since $H_2(D)=0$; apply Lemma~\ref{lem:toolbox}(b).
\end{proof}

\subsection{Blocks}\label{sec:blocktheorem}
For $q\ge1$ let $\Sigma_q$ be the closed oriented surface of genus $q$ with its standard
one-vertex cell structure: edges $s_1,t_1,\dots,s_q,t_q$ and one $2$-cell attached along
$\omega_q=\prod_{h=1}^q[s_h,t_h]$.

\begin{theorem}[Blocks]\label{thm:blocks}
For every $q\ge1$ there is a finite presentation
\[B_q=\pres{s_1,t_1,\dots,s_q,t_q,Z_q\mid\beta_{q,1},\dots,\beta_{q,m_q}},\]
with $m_q\ge1$ and $Z_q$ a finite list of \emph{internal} generators, such that:
\begin{enumerate}[label={\normalfont\bfseries(B\arabic*)},leftmargin=14mm]
\item\label{B1} \emph{Marking.} There is a cellular map $b_q\colon\Sigma_q\to\cx{B_q}$
sending each edge to the generator of the same name.
\item\label{B2} \emph{Replacement creates no new $\pi_2$.} Let $E=\pres{X\mid R,r}$ be a
presentation, and suppose $r=\prod_{h=1}^q[u_h,v_h]$ in the free group on $X$. Replace
the relator $r$ by a block with fresh internal generators $Z_r$:
\begin{equation}\label{eq:replace}
E'=\pres{X,Z_r\mid R,\ \beta_{q,j}(u_1,v_1,\dots,u_q,v_q,Z_r)\ (1\le j\le m_q)} .
\end{equation}
Let $f\colon\cx{E}\to\cx{E'}$ be the \emph{structural map}: the identity on the
$1$-skeleton of $\cx{E}$ and on the $2$-cells of $R$, and on the $2$-cell $r$ the
filling of $\prod_h[u_h,v_h]$ given by $b_q$. Then
\begin{equation}\label{eq:block-generation}
\pi_2(E')=\Z[G(E')]\cdot f_*\pi_2(E).
\end{equation}
The same holds for the simultaneous replacement of any set of relators, each with its
own fresh internal generators.
\item\label{B3} \emph{Capped blocks are Cockcroft.} The presentation
$\pres{Z_q\mid\beta_{q,j}(1,\dots,1,Z_q)\ (1\le j\le m_q)}$ is Cockcroft.
\end{enumerate}
\end{theorem}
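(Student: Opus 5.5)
The plan is to reduce \ref{B1}--\ref{B3} to geometric properties of a single finite two-complex $\cx{B_q}$ that contains the $1$-skeleton $\Gamma_q=\bigvee_h(S^1_{s_h}\vee S^1_{t_h})$ of $\Sigma_q$. I would then build that complex from a truncated non-positively curved square complex, following Davis and Davis--Januszkiewicz--Weinberger. I would aim for three geometric properties.
\begin{enumerate}[label=\textup{(\roman*)}]
\item $\cx{B_q}$ is aspherical, say a compact locally CAT(0) square complex.
\item $b_q\colon\Sigma_q\to\cx{B_q}$ is a cellular embedding onto a locally convex subcomplex, so it is $\pi_1$-injective.
\item The pair $(\cx{B_q},\Sigma_q)$ is acyclic with coefficients in $\Z[\pi_1\cx{B_q}]$, i.e.\ $H_*(\widetilde{\cx{B_q}},p^{-1}\Sigma_q)=0$.
\end{enumerate}
With these, \ref{B1} is immediate, taking $b_q$ to be the inclusion. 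For \ref{B3}, capping $s_h,t_h$ amounts to killing $\pi_1(\Sigma_q)$. Relative acyclicity of the pair then gives that $H_2$ of the capped complex is the image of $H_2$ of the capped surface, a wedge of spheres. Those spheres in turn come from the fundamental class $[\Sigma_q]$ via a map to the aspherical-type target. I would check the Cockcroft property directly through Lemma~\ref{lem:toolbox}(d), by mapping the capped block to a suitable aspherical complex that is injective on $H_2$.

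The heart is \ref{B2}. Write $E^\circ$ for $\cx{E}$ with the open $2$-cell $r$ removed. Then $\cx{E}=E^\circ\cup_{\Gamma_q}D^2$ and $\cx{E'}=E^\circ\cup_{\Gamma_q}\cx{B_q}$, where $\Gamma_q\to E^\circ$ sends $s_h,t_h$ to $u_h,v_h$. The structural map $f$ is the identity on $E^\circ$ and $b_q$ on the disk, viewed as $\Sigma_q$ collapsed appropriately. I would pass to the universal cover $\widetilde{\cx{E'}}$. As in the proof of Proposition~\ref{prop:descent} (via Lemma~\ref{lem:components}), it is the union of two families of components: preimages of $E^\circ$, and preimages of $\cx{B_q}$. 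The latter are copies of a regular cover $\widehat B$ of $\cx{B_q}$, and the two families meet in copies of the corresponding cover $\widehat\Gamma$ of $\Gamma_q$.

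I would write the Mayer--Vietoris sequence for this union and compare it with the analogous decomposition of the space obtained by gluing, into the same cover, the preimage $\widehat\Sigma$ of $\Sigma_q$ in place of $\widehat B$. Property (iii), which passes to every regular cover because the relative chain complex is a free $\Z[\pi_1]$-complex with vanishing homology and hence contractible, gives $H_*(\widehat B,\widehat\Sigma)=0$. So $H_2(\widetilde{\cx{E'}})$ is generated by classes coming through $\widehat\Sigma$ and $E^\circ$. These are exactly the $\Z[G(E')]$-translates of $f_*\pi_2(E)$, since lifts of the disk $r$ map to lifts of $b_q(\Sigma_q)$. Simultaneous replacement is the same argument with one family of block components per replaced relator.

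The main obstacle is the construction itself. One needs a finite two-dimensional NPC complex containing $\Sigma_q$ as a locally convex subcomplex such that the pair is acyclic over the group ring, and not merely over $\Z$, while keeping \ref{B3} true. I would first try Davis's reflection-group trick applied to a flag triangulation of an acyclic two-complex whose boundary is the surface. The DJW truncation would then make the resulting cube complex two-dimensional near $\Sigma_q$ without losing non-positive curvature. Finally I would read off the presentation from the one-vertex quotient and verify $\pi_1$-injectivity of $\Sigma_q$ by local convexity.
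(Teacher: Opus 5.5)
\paragraph{The gap: (ii) and (iii) together are incompatible with \ref{B3}.}
By the exact sequence of the pair, $H_1(\widetilde{\cx{B_q}},p^{-1}\Sigma_q)=0$ forces $p^{-1}\Sigma_q$ to be connected, so $\pi_1(\Sigma_q)\to\pi_1(\cx{B_q})$ is onto. With (ii) it is an isomorphism. Then $p^{-1}\Sigma_q$ is contractible, $\widetilde{\cx{B_q}}$ is acyclic and simply connected, and $b_q$ is a homotopy equivalence.

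The trivial block $\pres{s_1,t_1,\dots,s_q,t_q\mid\omega_q}$, with no internal generators, is of exactly this kind, and it satisfies \ref{B1} and \ref{B2} for free. But its capped presentation is one $2$-cell on the empty word, i.e.\ $S^2$. By the gluing lemma, every block satisfying (ii) and (iii) likewise has capped block $\simeq S^2$.

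More generally, extending $b_q$ by the identity on the caps maps $\Sigma_q\cup(\text{caps})\simeq S^2$ into the capped block $V_q$. The Hurewicz image of this sphere is $b_{q*}[\Sigma_q]\in H_2(\cx{B_q})\subseteq H_2(V_q)$. So \ref{B3} \emph{requires} $b_{q*}[\Sigma_q]=0$ (this is Lemma~\ref{lem:spine}(iii) in the paper), which is the opposite of what (iii) gives. No map to an aspherical target can rescue your sketch of \ref{B3}, since a simply connected complex with $H_2\ne0$ is never Cockcroft. Your closing idea of an acyclic piece bounded by $\Sigma_q$ would kill $[\Sigma_q]$, which is right for \ref{B3}, but then (iii) is false.

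\paragraph{Without (iii), your \ref{B2} argument loses both supports.}
First, the components of the preimage of $E^\circ\cup\Sigma_q\simeq\cx{E}$ in $\widetilde{\cx{E'}}$ are covers of $\cx{E}$ with group $\ker(G(E)\to G(E'))$. Their $H_2$ surjects onto $H_2$ of that kernel, so it is not generated by translates of $\pi_2(E)$ without control of $\pi_1$. The paper proves that $\cx{E}\to\cx{E'}$ is $\pi_1$-injective.

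Second, you need $H_2(\widehat B,\widehat\Sigma)=0$ for whichever cover $\widehat B$ of the block occurs. That cover is governed by the uncontrolled kernel of $\pi_1(\cx{B_q})\to G(E')$. Now the relative $H_1$ over $\Z[\pi_1(\cx{B_q})]$ is nonzero, because $\Sigma_q$ has many lifts. So vanishing in degree $2$ no longer transfers formally to all covers.

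\paragraph{How the paper gets both from geometry.}
It takes $\cx{B_q}\simeq M_q$, Davis's cube complex $C(L_q)$ with one corner cut off along a copy of $\Sigma_q$. Set $W=X\cup_a(\Sigma_q\times[0,1])\cup M_q$.
\begin{itemize}
\item A CAT(0) radial retraction gives $\pi_1$-injectivity of $X\to W$. Hence the components $U_\alpha$ of the preimage $U$ of $X$ in $\widetilde W$ are copies of $\widetilde X$.
\item Collapsing each $U_\alpha$ to a point gives a cube complex whose new vertex links are covers of the flag complex $L_q$, hence flag. So this complex is CAT(0) and contractible, and $H_2(\widetilde W,U)=0$ whatever cover arises.
\item \ref{B3} then follows from $C_q\simeq V_q\cup e^3$, with $\partial e^3=b_{q\#}[\Sigma_q]=0$ and $C_q$ aspherical.
\end{itemize}
Lemma~\ref{lem:geometric-pushout} with $X=\Sigma_q$ shows this block is aspherical with $\pi_1$-injective $\Sigma_q$. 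So your (i) and (ii) are fine; it is (iii) that must be replaced.
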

Section~\ref{sec:blocks} proves this theorem. Property \ref{B2} is what makes $T$
preserve inclusions that are zero on $\pi_2$, and \ref{B3} is what makes the terminal
extension Cockcroft.

\subsection{The operations \texorpdfstring{$T$ and $\Tp$}{T and T+}}
Let $P$ be a presentation extending $D$. Fix once and for all, for every word $w$ with
zero exponent sum in every generator, an expression $w=\prod_{h=1}^q[u_h,v_h]$ in the
free group with $q\ge1$. Such expressions exist since such $w$ lie in the commutator
subgroup; for instance, repeatedly use
$p\,\ell\,m\,t=[p\ell p^{-1},pmp^{-1}]\cdot p\,m\,\ell\,t$ to sort the letters of $w$
according to a fixed order on the generators, and use $[1,1]$ if no commutator occurs.
Define $T(P)$ by the following three rules.
\begin{enumerate}[label=\textbf{\arabic*.}]
\item \emph{Generators become commutators.} For each extra generator $z$ of $P$ take a
fresh pair of generators $a_z,b_z$ and substitute $z\mapsto[a_z,b_z]$ in every extra
relator. Core generators and core relators are untouched. Call $r^*$ the substituted
form of an extra relator~$r$.
\item \emph{Correction by core relators.} Let $e\in\Z^{(I)}$ be the vector of exponent
sums of $r^*$ in the $x_i$, and $c=-\Theta^{-1}e\in\Z^{(J)}$. Replace $r^*$ by
\begin{equation}\label{eq:correction}
r^0=r^*\prod_{j\in\supp(c)}r_j^{\,c_j}\qquad(\text{in increasing order of }j),
\end{equation}
which has zero exponent sum in every generator: in the $x_i$ by the choice of $c$, and in
the $a_z,b_z$ because they only occur inside the commutators $[a_z,b_z]$.
\item \emph{Blocks.} Write $r^0=\prod_{h=1}^q[u_h,v_h]$ using the fixed expression, and
replace the relator $r^0$ by the block~\eqref{eq:replace}, with fresh internal generators
$Z_r$ for each label~$r$.
\end{enumerate}
The core is unchanged, so $T(D)=D$. The words $u_h,v_h$ involve only the $x_i$ and the
$a_z,b_z$. Composing the maps of the three rules (the third being the structural map of
\ref{B2}) gives a \emph{structural map} $\eta_P\colon P\to T(P)$, with $x_i\mapsto x_i$
and $z\mapsto[a_z,b_z]$. The \emph{terminal extension} is
\begin{equation}\label{eq:terminal}
\Tp(P)=T(P)\ \cup\ \{\text{$2$-cells along }a_z\text{ and along }b_z:\ z\text{ an extra generator of }P\};
\end{equation}
internal generators of blocks are not capped.

\emph{Naturality.} All choices in $T$ (the names $a_z,b_z$, the corrections, the
factorizations, the internal generators) depend only on the labelled generator or relator
concerned. Hence a labelled inclusion $j\colon P\subset P'$ induces a labelled inclusion
$T(j)\colon T(P)\subset T(P')$ with $T(j)\circ\eta_P=\eta_{P'}\circ j$. A new extra
generator yields a new pair $a_z,b_z$, and a new extra relator yields a new block, which
has $m_q\ge1$ relators; so $T$ preserves strict inclusions.

\begin{lemma}\label{lem:structural}
For every presentation $P$ extending $D$,
\begin{equation}\label{eq:generation}
\pi_2(T(P))=\Z[G(T(P))]\cdot\eta_{P*}\pi_2(P).
\end{equation}
Consequently, if $P$ is Cockcroft then so is $T(P)$, and if $j\colon P\subset P'$ is zero
on $\pi_2$ then so is $T(j)$.
\end{lemma}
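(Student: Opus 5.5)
We would factor $\eta_P$ into the three rules and show that each factor $f$ satisfies $\pi_2(\text{target})=\Z[\pi_1]\cdot f_*\pi_2(\text{source})$. The generation property is transitive along composites: if $\pi_2(Y)=\Z[\pi_1Y]\,g_*\pi_2(X)$ and $\pi_2(X)=\Z[\pi_1X]\,f_*\pi_2(W)$, then $g_*(\gamma\cdot f_*s)=g_*(\gamma)\cdot g_*f_*s$. Hence \eqref{eq:generation} follows, and the two consequences follow directly from Lemma~\ref{lem:toolbox}(c). For the zero-map statement we use naturality $T(j)\circ\eta_P=\eta_{P'}\circ j$, so that $T(j)_*\eta_{P*}=\eta_{P'*}j_*=0$.

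\emph{Rule 1.} Let $P_1$ be obtained from $P$ by adding generators $a_z,b_z$ and relators $z[a_z,b_z]^{-1}$, keeping $z$. The inclusion $P\subset P_1$ adds, for each $z$, a $2$-cell and two $1$-cells; collapsing it by Tietze is a homotopy equivalence relative to $P\vee\bigvee(S^1\vee S^1)$. In fact $P_1\simeq P\vee\bigvee_z(S^1_{a_z}\vee S^1_{b_z})$ up to homotopy, by sliding: the new cell together with the free edge $z$ \ldots\ More cleanly, eliminating $z$ instead shows $P_1\simeq P^*$, the complex with $z$ substituted by $[a_z,b_z]$. The map $P\to P^*$ is then the composite $P\to P\vee\bigvee(S^1\vee S^1)\simeq P_1\simeq P^*$. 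For a wedge with circles $W=P\vee\bigvee S^1$, the universal cover consists of copies of $\widetilde P$ glued along trees, so $\pi_2(W)=\Z[\pi_1W]\otimes_{\Z[\pi_1P]}\pi_2(P)$, which is generated by the image of $\pi_2(P)$.

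\emph{Rule 2.} Replacing $r^*$ by $r^0=r^*\prod r_j^{c_j}$ can be realised by the map sending the cell $r^0$ to the cell $r^*$ followed by the core cells. The converse map is of the same form, so the two complexes are homotopy equivalent via these cell slides. Cell slides along existing $2$-cells are homotopy equivalences compatible with the structural maps, so $\pi_2$ maps isomorphically. Here $\Theta^{-1}$ makes $c$ finitely supported, so $r^0$ is a genuine finite word.

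\emph{Rule 3.} Replacing each $r^0$ by its block, with simultaneous replacement, is exactly \ref{B2}, which gives \eqref{eq:block-generation} for the structural map. The expected main obstacle is Rule~1 done carefully: one must check that the substituted map $z\mapsto[a_z,b_z]$ agrees up to homotopy with the composite through $P_1$, so that the structural map really factors as described. The infinite case, with possibly infinitely many extra generators, must also be handled; this is fine since the wedge and Tietze arguments are cellwise.
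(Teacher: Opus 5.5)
Your reduction to the three rules, the transitivity of generation, your treatment of Rules~2 and~3, and the derivation of the two consequences from Lemma~\ref{lem:toolbox}(c) and naturality all agree with the paper. The gap is in Rule~1, the one step with real content.

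Your argument relies on $P_1\simeq P\vee\bigvee_z(S^1_{a_z}\vee S^1_{b_z})$, and this is false. The edge $z$ is not free in $P$, because it occurs in the extra relators, so the $2$-cell along $z[a_z,b_z]^{-1}$ cannot be collapsed away. The two spaces already have different fundamental groups: $G(P_1)$ is $G(P)*F(a_z,b_z)$ modulo $z=[a_z,b_z]$, not a free product. This matters in the induction. When $T$ is applied to $P=\Tp(L_m)$, the capped generators are extra generators that are trivial in $G(P)$, and then $a_z,b_z$ commute in $G(P_1)$. So your computation of $\pi_2$ of a wedge with circles is about the wrong space, and nothing in your argument excludes the new $2$-cells creating new spherical classes. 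The issue you flag as the main obstacle (identifying the substitution $z\mapsto[a_z,b_z]$ with $P\subset P_1$ followed by eliminating $z$) is the easy, purely Tietze part.

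What is missing is an actual reason why the inclusion $P\subset P_1$ has the generation property. The paper reads the new relator (inverted) as $a_zb_za_z^{-1}(zb_z)^{-1}$. Then $P_1$ is a graph of spaces: the vertex space is $P\vee\bigvee_zS^1_{b_z}$, and $a_z$ together with the new $2$-cell forms a cylinder glued along the loops $b_z$ and $zb_z$. Both loops have infinite order in $G(P)*F(\{b_z\})$ (kill $G(P)$), whatever the order of $z$ in $G(P)$. So $G(P_1)$ is an HNN extension along infinite cyclic subgroups in which the vertex group embeds. Hence $\widetilde{P_1}$ is a tree of spaces whose vertex spaces $V_\alpha$ are copies of the universal cover of $P\vee\bigvee_zS^1$ and whose edge spaces are strips $\mathbb R\times[0,1]$. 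Excision then gives $H_2(\widetilde{P_1},\bigsqcup_\alpha V_\alpha)=0$, and only at this point does your wedge-with-circles argument apply, to the $V_\alpha$.

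The more obvious splitting of $P_1$ as $P$ with a once-punctured torus glued along $z$ would need $z$ to have infinite order in $G(P)$. That fails in exactly the case above, and the HNN form avoids it.
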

\begin{proof}
If $f\colon A\to B$ and $g\colon B\to C$ satisfy the generation property
$\pi_2(B)=\Z[\pi_1B]\cdot f_*\pi_2(A)$ and similarly for $g$, then so does $gf$,
because $g_*(\gamma s)=g_*(\gamma)\,g_*(s)$. It therefore suffices to check generation for
each rule.

\emph{Rule 1.} Let $P_1'$ be obtained from $P$ by adding, for each extra generator $z$,
a loop $b_z$, a loop $a_z$ and a $2$-cell along $a_zb_za_z^{-1}(zb_z)^{-1}$. This relator
says $z=[a_z,b_z]$, and eliminating each $z$ by it is a Tietze transformation
\cite[Ch.~I]{HMS}, giving the presentation of rule~1; under this homotopy equivalence the
inclusion $P\subset P_1'$ becomes the map of rule~1. Now $P_1'$ is a graph of spaces
\cite{ScottWall}: the vertex space is $P\vee\bigvee_zS^1_{b_z}$, and each new $2$-cell
together with $a_z$ is a cylinder $S^1\times[0,1]$ glued along $b_z$ and $zb_z$. Both
loops have infinite order in $G(P)*F(\{b_z\})$, as is seen by killing $G(P)$. So
$G(P_1')$ is an HNN extension along infinite cyclic subgroups, in which the vertex group
embeds \cite[Ch.~IV, \S2]{LyndonSchupp}. Consequently $\widetilde{P_1'}$ is a tree of
spaces whose vertex spaces $V_\alpha$ are copies of the universal cover of
$P\vee\bigvee_zS^1$ and whose edge spaces are strips $\mathbb R\times[0,1]$. Since
$H_*(\mathbb R\times[0,1],\mathbb R\times\{0,1\})$ is concentrated in degree~$1$,
excision gives $H_2(\widetilde{P_1'},\bigsqcup_\alpha V_\alpha)=0$, so
$\bigoplus_\alpha H_2(V_\alpha)\to H_2(\widetilde{P_1'})$ is onto. By the same argument
applied to the wedge with circles (a tree of copies of $\widetilde P$ joined by lines),
each $H_2(V_\alpha)$ is generated by translates of $\pi_2(P)$.

\emph{Rule 2.} The attaching maps of $r^*$ and $r^0$ are homotopic in the complex
obtained by removing all extra relator cells, since the correcting word is null-homotopic
in $D$. Hence the map of rule~2 is a homotopy equivalence \cite[Prop.~0.18]{Hatcher}.

\emph{Rule 3.} This is exactly \ref{B2}.

The consequences follow from~\eqref{eq:generation} and Lemma~\ref{lem:toolbox}(c): if
$P$ is Cockcroft, so is $T(P)$; and if $j_*=0$ on $\pi_2$, then
$T(j)_*\eta_{P*}=\eta_{P'*}j_*=0$, so $T(j)_*=0$.
\end{proof}

\begin{lemma}\label{lem:terminal}
Suppose that $P$ is Cockcroft and that all core generators $x_i$ are trivial in $G(P)$.
Then the $x_i$ are trivial in $G(T(P))$ and $G(\Tp(P))$, the inclusion
$T(P)\subset\Tp(P)$ is zero on $\pi_2$, and $\Tp(P)$ is Cockcroft.
\end{lemma}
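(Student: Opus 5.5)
The plan is to prove the four assertions in order, using only the structural properties established so far: Lemma~\ref{lem:structural}, Lemma~\ref{lem:toolbox}, and the block property \ref{B3}.

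\emph{Triviality of the $x_i$.} The structural map $\eta_P\colon P\to T(P)$ sends $x_i\mapsto x_i$. Hence the induced homomorphism $G(P)\to G(T(P))$ carries the trivial element $x_i$ to $x_i$, which is therefore trivial. Composing with $G(T(P))\to G(\Tp(P))$ gives the same conclusion in $G(\Tp(P))$.

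\emph{The inclusion is zero on $\pi_2$.} Let $\iota\colon T(P)\subset\Tp(P)$ denote the inclusion. By \eqref{eq:generation} and Lemma~\ref{lem:toolbox}(c), it suffices to show that $\iota_*\eta_{P*}=0$ on $\pi_2(P)$. For this I would apply Lemma~\ref{lem:toolbox}(b) to $\iota\circ\eta_P\colon P\to\Tp(P)$; this is allowed because $P$ is Cockcroft by hypothesis. It remains to see that $\iota\circ\eta_P$ is trivial on $\pi_1$. The group $G(P)$ is generated by the core generators $x_i$ and the extra generators $z$. The $x_i$ are trivial by the previous step. Each $z$ maps to $[a_z,b_z]$, and $a_z,b_z$ are capped in $\Tp(P)$, so its image is trivial as well.

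\emph{$\Tp(P)$ is Cockcroft.} First I collapse the caps. Each capped generator $a_z$ or $b_z$ together with its capping $2$-cell forms a disc, and contracting these discs is a homotopy equivalence. It takes $\Tp(P)$ to a complex $Q$ with the following cells:
\begin{itemize}
\item generators: the $x_i$ and all internal generators $Z_r$;
\item relators: the core relators $r_j$, and the block relators $\beta_{q,j}(u_1,v_1,\dots,Z_r)$ with every $a_z,b_z$ replaced by $1$.
\end{itemize}
After this substitution the words $u_h,v_h$ involve only the $x_i$.

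Next I define a cellular map $F\colon Q\to W$, where $W=\bigvee_r C_r$ is the wedge of the capped-block presentation complexes $C_r=\cx{\pres{Z_r\mid\beta_{q,j}(1,\dots,1,Z_r)}}$. The map $F$:
\begin{itemize}
\item sends every $x_i$ to the base point and each $Z_r$ identically to itself;
\item sends each core $2$-cell $r_j$ to the base point, which is legitimate since its boundary maps to the base point;
\item sends each block $2$-cell to the corresponding cell of $C_r$, which is legitimate because once the $x_i$ are collapsed the attaching word becomes exactly $\beta_{q,j}(1,\dots,1,Z_r)$.
\end{itemize}
By \ref{B3} each $C_r$ is Cockcroft, and so $W$ is Cockcroft by the wedge statement of Lemma~\ref{lem:toolbox}(d). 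By the main statement of Lemma~\ref{lem:toolbox}(d), it then suffices to show that $H_2(F)$ is injective.

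\emph{Injectivity of $H_2(F)$.} Since there are no $3$-cells, $H_2=\ker\partial_2$ for both $Q$ and $W$, and on $2$-chains $F$ is the projection that kills the $r_j$-coordinates and is the identity on block cells. So a cycle $\xi$ in the kernel has all block-cell coefficients zero, i.e.\ $\xi=\sum_j\lambda_j r_j$. Its boundary is then $\Theta\lambda$, which vanishes; since $\Theta$ from \eqref{eq:exponent} is injective, $\lambda=0$.

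The main point requiring care is the cellularity and well-definedness of $F$ on the block cells. One must check that after collapsing the $x_i$ and the caps, the attaching maps really become the capped-block relators letter for letter. This holds because the $u_h,v_h$ are words in the $x_i$ and $a_z,b_z$ only.
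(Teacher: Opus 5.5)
Your proof is correct and follows the paper's route. The first two parts are identical to the paper's, and for the Cockcroft property the paper likewise collapses $D$ and the capped generators to reach the wedge of capped blocks, then applies \ref{B3} and Lemma~\ref{lem:toolbox}(d). The only difference is cosmetic: you cap first and check injectivity on $H_2$ of the collapse of $D$ by hand via $\ker\Theta=0$, whereas the paper quotients by $D$ first and reads off $H_2(\Tp(P))\cong H_2(\Tp(P)/D)$ from the long exact sequence of the pair.
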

\begin{proof}
The $x_i$ remain trivial because $\eta_P$ fixes them.

\emph{The inclusion.} The composite $P\to T(P)\subset\Tp(P)$ is trivial on $\pi_1$: the
$x_i$ are trivial by hypothesis, and each extra generator $z$ maps to $[a_z,b_z]$, a
commutator of two capped loops. Since $P$ is Cockcroft, the composite is zero on $\pi_2$
by Lemma~\ref{lem:toolbox}(b). By~\eqref{eq:generation} and Lemma~\ref{lem:toolbox}(c),
the inclusion $T(P)\subset\Tp(P)$ is zero on $\pi_2$.

\emph{Cockcroft.} Since $D$ is acyclic, the quotient map
$\kappa\colon\Tp(P)\to\Tp(P)/D$ is an isomorphism on $H_2$ (long exact sequence of the
pair). In $\Tp(P)/D$, all $x_i$ and core relators have disappeared, and the remaining
cells are the loops $a_z,b_z$ with their capping disks, and the blocks. Eliminating the
$a_z,b_z$ by their capping disks is a homotopy equivalence \cite[Ch.~I]{HMS}. As the words
$u_h,v_h$ only involve the $x_i$ and the $a_z,b_z$, every block relator
$\beta_{q,j}(u,v,Z_r)$ becomes $\beta_{q,j}(1,\dots,1,Z_r)$, and distinct blocks have
disjoint internal generators. Therefore
\[
\Tp(P)/D\ \simeq\ \bigvee_r\cx{\pres{Z_r\mid\beta_{q(r),j}(1,\dots,1,Z_r)\ (1\le j\le m_{q(r)})}} ,
\]
the wedge being over the extra relators $r$ of $P$. Each wedge summand is Cockcroft by
\ref{B3}, hence so is the wedge (Lemma~\ref{lem:toolbox}(d)), and $\Tp(P)$ is Cockcroft
by Lemma~\ref{lem:toolbox}(d) applied to~$\kappa$.
\end{proof}

\subsection{Proof of Proposition~\ref{prop:relative}}
We construct chains $D=L_0\subsetneq L_1\subsetneq\cdots\subsetneq L_m$ of presentations
extending $D$, related by labelled inclusions, such that
\begin{equation}\label{eq:inductive}
\parbox{0.85\textwidth}{\centering\itshape every $L_i$ is Cockcroft, every inclusion is
zero on $\pi_2$, and all $x_i$ are trivial in $G(L_i)$ for $i\ge1$.}
\end{equation}
By Lemma~\ref{lem:initial}, $(D,Y_D)$ is such a chain with $m=1$. Given such a chain, set
\begin{equation}\label{eq:induction}
(D,L_1,\dots,L_m)\ \longmapsto\ (D,\ T(L_1),\dots,T(L_m),\ \Tp(L_m)).
\end{equation}
By naturality, this is a strictly increasing chain of labelled inclusions; note that
$\Tp(L_m)\ne T(L_m)$ since $L_m\supseteq Y_D$ has extra generators. By
Lemma~\ref{lem:structural}, the inclusions $D=T(D)\subset T(L_1)$ and
$T(L_i)\subset T(L_{i+1})$ are zero on $\pi_2$, and every $T(L_i)$ is Cockcroft. By
Lemma~\ref{lem:terminal}, applicable because $L_m$ satisfies~\eqref{eq:inductive} and
$m\ge1$, the $x_i$ are trivial in $G(T(L_i))$ and $G(\Tp(L_m))$, the complex $\Tp(L_m)$ is
Cockcroft, and $T(L_m)\subset\Tp(L_m)$ is zero on $\pi_2$. So the new chain
satisfies~\eqref{eq:inductive}. Applying~\eqref{eq:induction} $n-1$ times to $(D,Y_D)$
gives a chain of length $n$; as $\pi_1(D)$ is generated by the $x_i$, it is killed in
every $L_i$ with $i\ge1$.

If $D$ is finite, then $I$ and $J$ are finite, and each pass introduces finitely many
generators and finitely many finite blocks; so all $L_i$ are finite. \qed

\section{Construction of the blocks}\label{sec:blocks}

This section proves Theorem~\ref{thm:blocks}. We combine two constructions. Davis's real
moment-angle complexes realize a prescribed finite flag complex as every vertex link of a
cube complex \cite[Example~2.2]{Davis}. The relative hyperbolization of
Davis--Januszkiewicz--Weinberger \cite[\S2]{DJW} cones off boundary components,
hyperbolizes, and truncates the cone points. We adapt two of their arguments, the radial
retraction \cite[Lemma~2.2]{DJW} and the CAT(0) property of a branched cover
\cite[Lemma~2.6]{DJW}, to show that $\pi_2$ does not grow when an \emph{arbitrary}
two-complex is glued into the truncated corner (Lemma~\ref{lem:geometric-pushout}).

\subsection{The cube complex and its truncation}
A simplicial complex is \emph{flag} if every finite set of pairwise adjacent vertices spans
a simplex. For a finite flag complex $L$ on the vertex set $\{1,\dots,d\}$ put
\begin{equation}\label{eq:cubes}
C(L)=\bigcup_{\sigma\in L}[-1,1]^{\sigma}\times\{-1,1\}^{\{1,\dots,d\}\smallsetminus\sigma}
\ \subset\ [-1,1]^d,
\end{equation}
a cube complex whose cubes are indexed by a simplex $\sigma$ and a choice of signs on
the remaining coordinates. Every vertex link of $C(L)$ is isomorphic to $L$. We use
Gromov's link condition: a cube complex whose vertex links are flag is locally CAT(0),
so its universal cover is CAT(0) and in particular contractible
\cite[Thms.~II.5.20 and II.4.1(2)]{BH}. For cube complexes that are not locally finite,
which occur in the proof of Lemma~\ref{lem:geometric-pushout}, see
\cite[Thm.~C.9 and its proof; Thms.~C.4, A.7]{Leary}.

Choose a finite triangulation of $\Sigma_q$ in which the $2q$ standard loops are edge
paths, and let $L_q$ be its barycentric subdivision. It is flag, since its simplices are
the chains of faces of the original triangulation, and pairwise comparable faces form a
chain. Number the vertices of $L_q$, set $C_q=C(L_q)$ and $v=(1,\dots,1)$, and cut off
the corner at $v$:
\begin{equation}\label{eq:cut}
M_q=C_q\cap\Bigl\{\textstyle\sum_h(1-x_h)\ge\tfrac12\Bigr\}.
\end{equation}
Since every other vertex of $C_q$ is at $\ell^1$-distance at least $2$ from $v$, only
the corner at $v$ is removed. The \emph{cut surface}
$C_q\cap\{\sum_h(1-x_h)=\tfrac12\}$ is a copy of the link of $v$, hence of $\Sigma_q$
(triangulated as $L_q$), and the removed part is the open cone on it. We regard
$\Sigma_q$ as the cut surface, the standard loops being edge paths in it.

\begin{lemma}\label{lem:spine}\
\begin{enumerate}[label=\textup{(\roman*)},itemsep=2pt]
\item $C_q=M_q\cup_{\Sigma_q}\Cone(\Sigma_q)$ is a finite $3$-dimensional cube complex all
of whose vertex links are isomorphic to $L_q$. It is aspherical.
\item $M_q$ collapses onto a finite two-dimensional subcomplex.
\item The inclusion $\Sigma_q\subset M_q$ maps the fundamental class $[\Sigma_q]$ to $0$ in
$H_2(M_q)$.
\end{enumerate}
\end{lemma}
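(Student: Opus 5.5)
For (i), I will work directly from the definition~\eqref{eq:cubes}. $C_q=C(L_q)$ is a union of cubes of dimension $|\sigma|\le 3$, since $L_q$ is a triangulated surface with simplices of at most three vertices; so it is a finite $3$-dimensional cube complex. The claim that every vertex link is isomorphic to $L_q$ is the standard property of Davis's construction already recorded above. At a vertex $w\in\{\pm1\}^d$, the cubes containing $w$ are $[-1,1]^\sigma\times\{w|_{\text{rest}}\}$ for $\sigma\in L_q$, and the link simplex of such a cube is $\sigma$. Since $L_q$ is flag, Gromov's link condition shows that $C_q$ is locally CAT(0). Hence its universal cover is CAT(0) and contractible, so $C_q$ is aspherical. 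Finally, the decomposition $C_q=M_q\cup_{\Sigma_q}\Cone(\Sigma_q)$ holds because the cut at $\ell^1$-distance $\tfrac12$ from $v$ meets only the cubes through $v$. Each such cube meets the cut in a simplex of the link, so the removed piece is exactly the cone on the link $\cong\Sigma_q$.

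For (iii), I will use the cone. $\Sigma_q$ bounds $\Cone(\Sigma_q)$ in $C_q$, so $[\Sigma_q]\mapsto0$ in $H_2(C_q)$; this alone is not enough, so I pass to $M_q$ by Mayer--Vietoris. Since $\Cone(\Sigma_q)$ is contractible, the sequence reads
\[
H_3(C_q)\to H_2(\Sigma_q)\to H_2(M_q)\oplus 0\to H_2(C_q).
\]
The image of $[\Sigma_q]$ in $H_2(M_q)$ is therefore zero exactly when $[\Sigma_q]$ lies in the image of the connecting map from $H_3(C_q)$. For that I need a $3$-cycle whose intersection with the cone is the cone on $\Sigma_q$. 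The natural candidate is the fundamental class of $C_q$ itself. The link of every vertex is the closed orientable surface $\Sigma_q$, so $C_q$ is a closed $3$-manifold. I will check that it is orientable by giving an explicit orientation of the cubes: orient $[-1,1]^\sigma\times\{\epsilon\}$ by the product of the coordinate orientation with the sign $\prod_{h\notin\sigma}\epsilon_h$ twisted by an orientation of $L_q$. I expect this sign bookkeeping to be the main obstacle.

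If orientability is awkward, I will fall back on a direct argument. $M_q$ is a compact $3$-manifold with boundary $\Sigma_q$, since truncating a manifold vertex leaves a manifold with boundary the link sphere-substitute. For a compact orientable $3$-manifold with connected boundary, the boundary is null-homologous, being $\partial[M_q]$. Either route needs orientability of $C_q$. I will prove it via the combinatorial orientation, checking that adjacent cubes induce opposite orientations on shared faces. Face-sharing changes one coordinate from free to the sign $\pm1$, and the sign factor flips accordingly.

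For (ii), I will use that $M_q$ is a compact $3$-dimensional cube complex, truncated at one corner, with nonempty boundary. I will collapse it in two stages. First, the truncated cubes at $v$ are prisms over simplices of $\Sigma_q$, each having a free face on the cut. I will push them away cube by cube, ordered by the dimension of the link simplex. Next I will collapse the remaining $3$-cubes inward from the resulting free $2$-faces. To order this globally I will use a shelling-type argument: a compact connected $3$-manifold with nonempty boundary collapses, after subdivision if needed, onto a $2$-dimensional spine. I would first try the standard fact that any such manifold with a cell structure collapses onto a $2$-complex by removing $3$-cells through free faces one at a time. This is possible because as long as $3$-cells remain, some $3$-cell has a free face: the union of the remaining $3$-cells is a $3$-manifold with nonempty boundary, since it is a proper part of a connected closed manifold.
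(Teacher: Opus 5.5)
Your treatment of (i) is fine, and your strategy for (ii) and (iii) is the paper's. The orientation coefficient $o(\sigma)\prod_{h\notin\sigma}\epsilon_h$ you propose is exactly the paper's, and your Mayer--Vietoris sequence is equivalent to its sequence $H_3(C_q)\to H_3(C_q,M_q)\to H_2(M_q)$. But your justifications rest on a false claim: $C_q$ is \emph{not} a closed $3$-manifold, and $M_q$ is not a manifold with boundary. Each vertex $w$ of $C_q$ has link $L_q\cong\Sigma_q$ with $q\ge1$, not a $2$-sphere, so $H_2(C_q,C_q\smallsetminus\{w\})\cong H_1(\Sigma_q)\ne0$. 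Only points of open edges, squares and cubes are manifold points, and $M_q$ still contains $2^d-1$ singular vertices. Hence you cannot invoke a manifold fundamental class, nor the manifold fact that the connecting map sends it to $\pm[\Sigma_q]$. Your fallback via $\partial[M_q]$ is unavailable for the same reason. In (ii), the existence of a free face at each stage does not follow from ``the remaining $3$-cells form a manifold with nonempty boundary''.

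What is true, and what the paper verifies directly, is that $C_q$ is an orientable, strongly connected pseudomanifold. Every square $[-1,1]^\tau\times\{\eta\}$ lies in exactly two cubes, indexed by $\sigma_i=\tau\cup\{k_i\}$, because the edge $\tau$ lies in exactly two triangles of $L_q$. With your orientation, both boundary coefficients of this square share the factor $\eta_{k_1}\eta_{k_2}\prod_{h\notin\sigma_1\cup\sigma_2}\eta_h$. The remaining signs $o(\sigma_i)(-1)^{p_i}$, with $p_i$ the number of elements of $\tau$ below $k_i$, cancel because $\tau$ cancels in the fundamental cycle of $\Sigma_q$. This is the sign bookkeeping you deferred, and it produces a $3$-cycle $\zeta$ with no manifold input. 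You must then check separately that $\zeta$ maps to a generator of $H_3(C_q,M_q)\cong H_2(\Sigma_q)$. This holds because the cubes at $v$ have all signs $+1$, so their coefficients are the $o(\sigma)$, and near $v$ the cycle is, up to sign, the cone on the fundamental cycle.

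For (ii), replace the manifold argument by connectivity of the graph whose vertices are the $3$-cells, with edges for shared squares. The cubes at a fixed vertex correspond to triangles of the connected closed surface $L_q$. Two vertices of $\{\pm1\}^d$ differing in coordinate $k$ share any cube indexed by a triangle containing $k$. So after collapsing a truncated cell through its cut triangle, some remaining $3$-cell shares a square with an already removed one, and that square is then a free face; ordering by a spanning tree gives the collapse. With these substitutions your outline becomes the paper's proof.
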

\begin{proof}
(i) This follows from the discussion above, as $\dim L_q=2$.

(iii) Since $L_q$ is a closed surface, every edge $\tau$ of $L_q$ lies in exactly two
triangles; hence every square of $C_q$ lies in exactly two $3$-cubes. Orient $\Sigma_q$,
and let $o(\sigma)=\pm1$ compare the orientation of a triangle $\sigma$ with the one given
by increasing order of its vertices. Orient each cube $[-1,1]^\sigma\times\{\epsilon\}$
by increasing order of coordinates, and give it the coefficient
$o(\sigma)\prod_{h\notin\sigma}\epsilon_h$. Let a square $[-1,1]^\tau\times\{\eta\}$ lie
in the cubes indexed by $\sigma_i=\tau\cup\{k_i\}$, $i=1,2$, and let $p_i$ be the number
of elements of $\tau$ smaller than $k_i$. The square appears in the boundary of the
$i$-th cube with coefficient $\eta_{k_1}\eta_{k_2}E\cdot o(\sigma_i)(-1)^{p_i}$, where
$E=\prod_{h\notin\sigma_1\cup\sigma_2}\eta_h$; and $o(\sigma_1)(-1)^{p_1}+o(\sigma_2)(-1)^{p_2}=0$
because $\tau$ cancels in the fundamental cycle of $\Sigma_q$. So these coefficients
define a $3$-cycle $\zeta$ of $C_q$. Now consider the exact sequence
\[
H_3(C_q)\to H_3(C_q,M_q)\xrightarrow{\ \delta\ }H_2(M_q).
\]
By excision $H_3(C_q,M_q)\cong H_3(\Cone\Sigma_q,\Sigma_q)\cong H_2(\Sigma_q)\cong\Z$,
and under this identification $\delta$ becomes the map induced by $\Sigma_q\subset M_q$.
The cubes containing $v$ are those with all signs $+1$, and their coefficients in
$\zeta$ are the $o(\sigma)$; so $\zeta$ maps to a generator of $H_3(C_q,M_q)$. Hence
$\delta=0$, and $[\Sigma_q]\mapsto0$.

(ii) $M_q$ is a polyhedral complex whose $3$-cells are the cubes of $C_q$, those
containing $v$ being truncated. Consider the graph whose vertices are these $3$-cells,
two being adjacent when they share a $2$-cell. It is connected: the cubes containing a
given vertex $w\in\{\pm1\}^d$ correspond to the triangles of $L_q$, and are connected
because $L_q$ is a connected closed surface; and two vertices $w,w'$ differing in one
coordinate $k$ lie in a common cube, namely any cube indexed by a triangle containing
$k$. Choose a spanning tree, rooted at a truncated cube. A triangle of the cut surface
lies in only one $3$-cell, so we can collapse the root across it. Then collapse each other
$3$-cell, in order of distance to the root, across the $2$-cell it shares with its parent:
that $2$-cell lies in exactly two $3$-cells and the parent has been removed, so it is a
free face. All $3$-cells disappear.
\end{proof}

\subsection{The presentation \texorpdfstring{$B_q$}{Bq} and the marking (B1)}
Let $S$ be the two-dimensional subcomplex of Lemma~\ref{lem:spine}(ii), and let
$\rho\colon M_q\to S$ be the deformation retraction given by the collapse. By cellular
approximation \cite[Thm.~4.8]{Hatcher}, $\rho|_{\Sigma_q}$ is homotopic to a cellular map
$c\colon\Sigma_q\to S$, where $\Sigma_q$ carries its one-vertex structure. Contract a
maximal tree of $S$, call $z_1,\dots,z_k$ the remaining oriented edges, let $w_F(z)$ be
the boundary word of each $2$-cell $F$ of $S$, and let $\mu_h(z),\nu_h(z)$ be the words
read along $c(s_h),c(t_h)$. Put
\begin{equation}\label{eq:block-presentation}
\begin{split}
B_q=\pres{s_1,t_1,\dots,s_q,t_q,z_1,\dots,z_k\mid\ &w_F(z)\ (F\text{ a $2$-cell of }S),\\[-2pt]
&s_h\mu_h(z)^{-1},\ t_h\nu_h(z)^{-1}\ (1\le h\le q)} ,
\end{split}
\end{equation}
with $Z_q=(z_1,\dots,z_k)$. The relators $s_h\mu_h^{-1},t_h\nu_h^{-1}$ form
generator--relator pairs, so the inclusion $S/\text{tree}\subset\cx{B_q}$ is a homotopy
equivalence, and $\cx{B_q}\simeq M_q$. Define $b_q$ on the $1$-skeleton by $s_h\mapsto
s_h$, $t_h\mapsto t_h$. The loop $\omega_q(s,t)$ is homotopic in $\cx{B_q}$, across the
cells $s_h\mu_h^{-1},t_h\nu_h^{-1}$, to $\omega_q(\mu,\nu)$. This is the image under
$c$ of the boundary of the $2$-cell of $\Sigma_q$, so it bounds the image under $c$ of
that $2$-cell. Following the homotopy and then this disk defines $b_q$ on the $2$-cell,
and gives \ref{B1}. By construction, under $\cx{B_q}\simeq M_q$ the map $b_q$ corresponds to the
inclusion $\Sigma_q\subset M_q$ up to homotopy.

\subsection{Generation in the pushout: proof of (B2)}
Let $E$, $r=\prod_h[u_h,v_h]$ and $E'$ be as in \ref{B2}, and let
$a\colon\Sigma_q\to\cx{E}$ be the cellular map sending $s_h,t_h$ to the loops
$u_h,v_h$ and the $2$-cell of $\Sigma_q$ onto the $2$-cell $r$ with degree one (through a
collar realizing the free reduction of $\prod_h[u_h,v_h]$ to $r$). Consider the double
mapping cylinder
\begin{equation}\label{eq:block-pushout}
W=\cx{E}\cup_a(\Sigma_q\times[0,1])\cup_{b_q}\cx{B_q}.
\end{equation}
In $W$, the $3$-cell (polygon)$\times(0,1)$ has the $2$-cell $r$ as a free face; removing
both is an elementary collapse. The remaining cylinder cells are an edge
(vertex)$\times(0,1)$ joining the two base points, and $2$-cells
(edge)$\times(0,1)$ identifying $s_h,t_h$ with $u_h,v_h$. Contracting the edge and
eliminating $s_h,t_h$ by these $2$-cells gives exactly the presentation complex
of~\eqref{eq:replace}. Under this homotopy equivalence
$W\simeq\cx{E'}$, the inclusion $\cx{E}\subset W$ corresponds to the structural map $f$.
Moreover, since double mapping cylinders are homotopy invariant
\cite[Ch.~0]{Hatcher}, and $b_q$ corresponds to the inclusion of the cut surface, $W$ is
homotopy equivalent rel $\cx{E}$ to the space in the following lemma, with $X=\cx{E}$.
Three-dimensional spaces occur only in this proof: every complex in our chains is a
two-complex.

\begin{lemma}\label{lem:geometric-pushout}
Let $X$ be a two-complex, $a\colon\Sigma_q\to X$ a cellular map, and
$W=X\cup_a(\Sigma_q\times[0,1])\cup_{\Sigma_q}M_q$, where $\Sigma_q\times\{0\}$ is glued
to $X$ by $a$ and $\Sigma_q\times\{1\}$ to the cut surface of $M_q$. Then
$\pi_1(X)\to\pi_1(W)$ is injective, and $\pi_2(W)$ is generated over $\Z[\pi_1W]$ by the
image of $\pi_2(X)$.
\end{lemma}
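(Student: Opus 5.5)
The plan is to adapt the Davis--Januszkiewicz--Weinberger argument: replace the collar and $X$ by a cone-like object so that $W$ becomes a branched version of $C_q$, show that object is aspherical by a CAT(0) argument, and then compare $W$ with it by a Mayer--Vietoris computation on universal covers.

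\textbf{Step 1: the comparison space.} Set $X'=X\cup_a\bigl(\Sigma_q\times[0,1]\bigr)$, the mapping cylinder of $a$; it deformation retracts onto $X$. Then $W=X'\cup_{\Sigma_q}M_q$. Form the auxiliary space $\widehat W=\Cone(X')\cup_{\Sigma_q}M_q$, or better its ``radial'' version in which the corner of $C_q$ at $v$ is replaced by the cone on the mapping cylinder, with radial coordinate $\sum_h(1-x_h)\le\tfrac12$.

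\textbf{Step 2: $\widehat W$ is locally CAT(0).} I would metrize $\widehat W$ as a cube complex, possibly after subdividing $X'$ so that the cone on it carries cubes. Away from $v$ the links are those of $C_q$, which are flag. At the cone point the link is $X'$, or the simplicial structure it inherits, and this is generally not flag. To handle this I would follow \cite[Lemma~2.6]{DJW}: pass to the branched-cover picture, where the link at $v$ is replaced by the universal-cover-like object. The cleaner route is to work directly with the universal cover of $W$ and to show that the \emph{truncated} piece $\widetilde M_q$ glued along copies of $\widetilde\Sigma_q$, or of covers of $\Sigma_q$, to copies of cylinders is controlled.

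\textbf{Step 3: injectivity of $\pi_1$.} I would prove $\pi_1(X)\to\pi_1(W)$ injective by van Kampen. $\pi_1(W)$ is the amalgam of $\pi_1(X)$ and $\pi_1(M_q)$ over $\pi_1(\Sigma_q)$, but $a_*$ need not be injective, so this is a pushout rather than an amalgam. I would use the radial retraction \cite[Lemma~2.2]{DJW}. Because $C_q$ is locally CAT(0) with $v$ a vertex whose link is $\Sigma_q$, there is a retraction of $M_q$, or of a suitable cover of it, onto a neighbourhood of the cut surface. More directly, $\pi_1(\Sigma_q)\to\pi_1(M_q)$ is injective, since the link of a vertex in an NPC cube complex $\pi_1$-injects once the corner is truncated. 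I would then build a retraction $W\to X'$ up to homotopy on $\pi_1$. Concretely, I would map $M_q$ to $\Cone$-free data by the radial projection from $v$ in the CAT(0) universal cover, which sends points of $\widetilde M_q$ to the boundary sphere $\partial B(v,\tfrac12)$, that is, to copies of the universal cover of $\Sigma_q$ in the link. Composing with $a$ yields a left inverse on $\pi_1$.

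\textbf{Step 4: $\pi_2$ generation.} Decompose $\widetilde W$ into the preimage $U$ of $X'$ and the preimage $V$ of $M_q$, which meet in the preimage of $\Sigma_q$. Mayer--Vietoris gives
\[
H_2(U\cap V)\to H_2(U)\oplus H_2(V)\to H_2(\widetilde W)\to H_1(U\cap V)\to H_1(U)\oplus H_1(V).
\]
Each component of $U$ is a cover of $X$, and by Step 3 it is the universal cover, so $H_2(U)$ is generated by translates of $\pi_2(X)$. It then suffices to show two things: that $H_2(V)$ comes from $H_2(U\cap V)$, and that $H_1(U\cap V)\to H_1(U)\oplus H_1(V)$ is injective. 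Since $\pi_1\Sigma_q$ injects into $\pi_1M_q$, components of $V$ are copies of the cover $\widehat M$ of $M_q$ with group $\ker(\pi_1M_q\to\pi_1W)$. The radial-retraction argument shows that $\widehat M$ with cones glued on its boundary surfaces is CAT(0) (branched-cover lemma \cite[Lemma~2.6]{DJW}), hence contractible. Via the long exact sequence of the pair, this gives $H_2(\widehat M)$ generated by the boundary surfaces and $H_1(\widehat M)\cong\bigoplus H_1(\text{boundary components})$ modulo nothing. Combined with Lemma~\ref{lem:spine}(iii)-type vanishing, this yields the required injectivity and surjectivity.

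\textbf{Main obstacle.} The hardest step is the branched-cover CAT(0) claim: the components of $U\cap V$ are covers of $\Sigma_q$ determined by $\ker a_*$, which can be arbitrary. Coning them off therefore produces non-locally-finite links, and one must check these links (infinite covers of $L_q$) are still flag so that Gromov's criterion applies, using \cite{Leary}.
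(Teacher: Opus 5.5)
Your plan has the right two ingredients: a radial retraction for $\pi_1$, and coning off covers of $L_q$ for $\pi_2$. Your Mayer--Vietoris bookkeeping in Step~4 is a legitimate variant of the paper's relative-homology argument. But both key steps are asserted rather than proved, and Step~4 hides the real difficulty.

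\emph{Step 3.} As phrased, a left inverse of $\pi_1(X)\to\pi_1(W)$ cannot exist in general. Take $X=\Sigma_q$ and $a=\mathrm{id}$, so $W\simeq M_q$. Since $\Sigma_q$ is aspherical, such a left inverse would be realised by a map $M_q\to\Sigma_q$ whose restriction to the cut surface is homotopic to the identity, contradicting Lemma~\ref{lem:spine}(iii). The retraction has to be built on an intermediate cover: the pullback $\widehat W$ of $\widetilde C_q\to C_q$ along the quotient $W\to W/X=C_q$. In $\widehat W$ every lift $v_\lambda$ of $v$, with its open cone neighbourhood, is replaced by a copy of $X\cup_a(\Sigma_q\times[0,1])$. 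The cut surface around a lift $v_0$ in $\widetilde C_q$ is a copy of $\Sigma_q$ itself, since a covering is an isomorphism on vertex stars; it is not a copy of the universal cover of $\Sigma_q$, and this is exactly why $a\circ\rho$ makes sense. You also have to say what happens on the other copies $X_\lambda$. The projection $\rho$ is defined on the whole closed cone at $v_\lambda$, so $\rho|_{\Sigma_{(\lambda)}}$ is null-homotopic. Use $a$ composed with that null-homotopy on the cylinder and send $X_\lambda$ to a point. This gives a retraction $\widehat W\to X$, hence injectivity into $\pi_1(\widehat W)\subseteq\pi_1(W)$.

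\emph{Step 4.} Your deductions are correct once you know $Z=\widehat M\cup\bigcup_\beta\Cone(S_\beta)$ is contractible, but you never prove that. The obstacle you name (flagness of infinite covers of $L_q$, non-local-finiteness) is the easy part. Flag links only make $Z$ locally CAT(0), hence aspherical, so $H_2(Z)\cong H_2(\pi_1Z)$. You therefore need $\pi_1Z=1$, and no radial retraction gives that. Using Step~3, $\pi_1Z$ is the quotient of $K=\ker(\pi_1M_q\to\pi_1W)$ by the normal closure of (the image of) $\ker a_*$. So its triviality is a Dehn-filling-type statement, and it has to come from the simple connectivity of $\widetilde W$.

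The paper gets this in one stroke. Collapse each component $U_\alpha\cong\widetilde X$ of the preimage of $X$ to a point $v_\alpha$. The result $V\simeq\widetilde W\cup\bigcup_\alpha\Cone(U_\alpha)$ is simply connected by van Kampen. It is a cube complex whose link at $v_\alpha$ is the pullback of $\widetilde X\to X$ along $a$, a possibly disconnected union of covers of $L_q$, hence flag. So $V$ is CAT(0), and $H_2(\widetilde W,U)\cong H_2(V,V_0)=0$ gives surjectivity of $H_2(U)\to H_2(\widetilde W)$ directly. Your $Z$'s are exactly the pieces of $V$, glued to one another at the points $v_\alpha$. Hence $\pi_1(V)$ is a free product of the $\pi_1(Z)$'s with a free group, which is how you could repair your argument; but once $V$ is in hand, the Mayer--Vietoris step is superfluous. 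The appeal to Lemma~\ref{lem:spine}(iii) plays no role here.
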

\begin{proof}
Collapsing $X$ to a point turns $\Sigma_q\times[0,1]$ into $\Cone(\Sigma_q)$, so
$W/X=C_q$; let $\kappa\colon W\to C_q$ be the quotient map. It is a homeomorphism from
$W\smallsetminus X$ onto $C_q\smallsetminus\{v\}$.

\emph{Step 1: $\pi_1$-injectivity (cf.\ \cite[Lemma~2.2]{DJW}).} Let
$\widehat W\to W$ be the pullback of the universal cover $\widetilde C_q\to C_q$ along
$\kappa$. It is obtained from $\widetilde C_q$ by replacing each lift $v_\lambda$ of $v$,
together with its open cone neighbourhood, by a copy
$X_\lambda\cup_a(\Sigma_q\times[0,1])$; in particular it is connected. Fix a lift $v_0$.
Since $\widetilde C_q$ is CAT(0), each point $y$ outside the open cone neighbourhood of
$v_0$ is joined to $v_0$ by a unique geodesic, which crosses the cut surface
$\Sigma_{(0)}$ at $v_0$ exactly once (inside each cube at $v_0$ geodesics from $v_0$ are
straight segments, and the cut is an affine hyperplane). Mapping $y$ to this crossing
point defines a retraction $\rho$ onto $\Sigma_{(0)}\cong\Sigma_q$, continuous because
geodesics in CAT(0) spaces depend continuously on their endpoints. Define
$R\colon\widehat W\to X_0=X$ as follows:
\begin{itemize}
\item on the lift of $M_q$, $R=a\circ\rho$;
\item on $X_0$, $R$ is the identity, and on its cylinder $R(\sigma,t)=a(\sigma)$;
\item at $\lambda\ne0$: $\rho$ is defined on the whole closed cone neighbourhood of
$v_\lambda$, so its restriction to the cut surface $\Sigma_{(\lambda)}\cong\Sigma_q$
extends over a cone and is therefore homotopic to a constant map by a homotopy
$H\colon\Sigma_q\times[0,1]\to\Sigma_q$ with $H_1=\rho|_{\Sigma_{(\lambda)}}$ and $H_0$
constant. Put $R(\sigma,t)=a(H(\sigma,t))$ on the cylinder and let $R$ be constant on
$X_\lambda$.
\end{itemize}
These definitions agree on overlaps, so $R$ is a retraction of $\widehat W$ onto $X_0$.
Hence $\pi_1(X)\to\pi_1(\widehat W)$ is injective, and so is
$\pi_1(\widehat W)\to\pi_1(W)$; their composite is induced by $X\subset W$.

\emph{Step 2: a CAT(0) quotient (cf.\ \cite[Lemma~2.6]{DJW}).} Let $U\subset\widetilde W$
be the preimage of $X$. By Step~1 and Lemma~\ref{lem:components}, its components
$U_\alpha$ are copies of $\widetilde X$. Collapse each $U_\alpha$ to a point $v_\alpha$
to obtain a space $V$, and let $V_0=\{v_\alpha\}$. As $\widetilde W$ is simply connected
and the $U_\alpha$ are disjoint connected subcomplexes, $V\simeq\widetilde W\cup\bigcup_\alpha\Cone(U_\alpha)$
\cite[Ch.~0]{Hatcher} is simply connected by van Kampen's theorem. The cylinders
attached to $U_\alpha$ become cones with apex $v_\alpha$, and each truncated cube together
with the cone on its cut triangle becomes a full cube; so $V$ is a cube complex. The
vertex links at old vertices are copies of $L_q$. The link at $v_\alpha$ is the preimage
in $\widetilde W$ of the cut surface adjacent to $U_\alpha$, that is, the pullback of
$\widetilde X\to X$ along $a$: a disjoint union of covers of $L_q$. Covers of $L_q$ are
flag: the vertices of a clique in a cover project to pairwise adjacent, hence distinct,
vertices of $L_q$, which span a simplex; this simplex lifts to a simplex through any
vertex of the clique, and since a covering map is an isomorphism on the star of a vertex,
the lift contains the whole clique. Hence $V$ is CAT(0) (by \cite{Leary}, as $V$ need not
be locally finite), so contractible.

\emph{Step 3: homology.} Since $(\widetilde W,U)\to(V,V_0)$ is a relative homeomorphism
of CW pairs and $V_0$ is discrete,
$H_2(\widetilde W,U)\cong H_2(V,V_0)\cong H_2(V)=0$. Hence
$H_2(U)\to H_2(\widetilde W)$ is surjective. By Lemma~\ref{lem:components}, $\pi_1(W)$
permutes the components of $U$ transitively, and the component through the base point
is a copy of $\widetilde X$ with $H_2=\pi_2(X)$. Since $\pi_2(W)=H_2(\widetilde W)$, it
is generated over $\Z[\pi_1W]$ by the image of $\pi_2(X)$.
\end{proof}

Lemma~\ref{lem:geometric-pushout} gives~\eqref{eq:block-generation} for a single
replacement, and by induction (generation is preserved under composition, as in the proof
of Lemma~\ref{lem:structural}) for finitely many. For an arbitrary set of replacements,
let $E'$ be the full replacement and $s\in\pi_2(E')$. A representing sphere is compact,
so it meets only finitely many blocks, coming from a finite set $\Phi$ of relators. Let
$E'_\Phi$ be the presentation in which only the relators in $\Phi$ are replaced. The
structural map $g\colon E'_\Phi\to E'$ replacing the remaining relators is the identity
on the subcomplex containing the sphere, so $s=g_*(s')$ for some
$s'\in\pi_2(E'_\Phi)=\Z[G(E'_\Phi)]\cdot f_{\Phi*}\pi_2(E)$. Since $g\circ f_\Phi=f$ and
$g_*$ is equivariant, $s\in\Z[G(E')]\cdot f_*\pi_2(E)$. This proves \ref{B2}.

\subsection{The capped block: proof of (B3)}
\begin{lemma}\label{lem:capped-block}
Let $V_q$ be $\cx{B_q}$ with a $2$-cell attached along each $s_h$ and each $t_h$. Then
$V_q$ is Cockcroft. Consequently the capped presentation of \ref{B3} is Cockcroft.
\end{lemma}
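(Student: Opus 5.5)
Since $\cx{B_q}\simeq M_q$, with $b_q$ corresponding to the inclusion $\Sigma_q\subset M_q$, I will transport the problem to $M_q$ and then to $C_q$. Attaching $2$-cells along the loops $s_h,t_h$ in $\cx{B_q}$ corresponds, up to homotopy, to attaching $2$-cells to $M_q$ along the $2q$ standard loops of the cut surface $\Sigma_q$. Call the result $M_q^+$; then $V_q\simeq M_q^+$.

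For the Cockcroft property I will use Lemma~\ref{lem:toolbox}(d). I need a map $g\colon M_q^+\to Y$ with $Y$ Cockcroft and $H_2(g)$ injective. The natural candidate is
\[
Y=C_q^+:=C_q\cup_{\Sigma_q}(\text{the same }2q\text{ disks}) \;=\; M_q^+\cup_{\Sigma_q}\Cone(\Sigma_q).
\]
Since $\Sigma_q$ with its standard loops capped is $\Sigma_q\cup(\text{disks})$, whose $\pi_1$ is trivial, I first aim to show that the cone over $\Sigma_q$ in $Y$ adds nothing to $H_2$. Precisely, I will compare $M_q^+$ with $Y$ using the pair $(Y,M_q^+)$. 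By excision,
\[
H_*(Y,M_q^+)\cong H_*(\Cone\Sigma_q,\Sigma_q)\cong\tilde H_{*-1}(\Sigma_q).
\]
Hence $H_3(Y,M_q^+)\cong\Z$, and $H_2(Y,M_q^+)\cong H_1(\Sigma_q)$. The kernel of $H_2(M_q^+)\to H_2(Y)$ is the image of $\delta\colon H_3(Y,M_q^+)\to H_2(M_q^+)$, which sends the generator to the image of $[\Sigma_q]$. By Lemma~\ref{lem:spine}(iii), $[\Sigma_q]\mapsto 0$ already in $H_2(M_q)$, so $\delta=0$ and $H_2(g)$ is injective.

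It remains to check that $Y$ is Cockcroft. I can write $Y=C_q\cup(2q\text{ disks along loops of }C_q)$, where $C_q$ is aspherical by Lemma~\ref{lem:spine}(i), so $Y$ is not obviously Cockcroft. A cleaner route is to note that the standard loops are null-homotopic in $C_q$, since they lie on the link sphere $\Sigma_q$, which bounds the cone. Attaching disks along null-homotopic loops gives $Y\simeq C_q\vee\bigvee_{2q}S^2$. A wedge of an aspherical complex with $2$-spheres is not Cockcroft, however, so I need a different target.

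I will therefore replace $Y$ by the map $M_q^+\to C_q$ obtained from the inclusion $M_q\subset C_q$, extending over each capping disk by a null-homotopy of the corresponding loop inside $\Cone(\Sigma_q)$, namely the cone on the loop. Then $C_q$ is aspherical, hence Cockcroft. For injectivity of $H_2(M_q^+)\to H_2(C_q)$, I will factor this map as $M_q^+\to Y\to C_q$, where the second map collapses each disk-plus-cone-on-loop. The disks together with the cones on their boundary loops form $2q$ spheres, and under $Y\simeq C_q\vee\bigvee S^2$ these spheres are exactly the wedge summands that get killed. The main obstacle is to show that the image of $H_2(M_q^+)$ in $H_2(Y)$ meets the span of these sphere classes only in $0$. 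I will test this by mapping further to $Y/M_q=\Cone\Sigma_q/\Sigma_q$ with the disks collapsed, which detects the sphere classes through their coefficients on the cone triangles. The cone $2$-cells over the loops do not lie in $M_q^+$, which is what makes this detection work. If this step fails, the fallback is to compute $H_2(M_q^+)$ directly from the chain complex of $M_q$ relative to $\Sigma_q$ and map it to $C_q$.

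Finally, the presentation $\pres{Z_q\mid\beta_{q,j}(1,\dots,1,Z_q)}$ is obtained from $V_q$ by Tietze eliminations: each capping disk kills one of $s_h,t_h$, so the two complexes are homotopy equivalent. The second statement therefore follows from the first.
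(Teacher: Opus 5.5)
Your argument is correct, but it takes a different route from the paper's. The paper gives $\Sigma_q$ its one-vertex structure before coning, so that the cones on $s_h,t_h$ \emph{are} the capping disks. After contracting the cone edge, $C_q\simeq V_q\cup e^3$. The boundary $\partial e^3$ consists of $b_{q\#}[\Sigma_q]$, which is a null-homologous $2$-cycle in a two-complex and hence the zero chain by Lemma~\ref{lem:spine}(iii), plus the capping cells with coefficients the exponent sums of $s_h,t_h$ in $\omega_q$, which vanish. So $H_2(V_q)\to H_2(V_q\cup e^3)$ is an isomorphism at once, and Lemma~\ref{lem:toolbox}(d) applies. You instead keep the triangulated cone and separate capping disks. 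This forces the detour through $Y\simeq C_q\vee\bigvee_{2q}S^2$ and the retraction $Y\to C_q$ folding each disk $D_i$ onto $\Cone(\ell_i)$. Your ``main obstacle'' step does go through, so no fallback is needed. However, phrase it homologically rather than via coefficients on cone triangles, which are not well defined on classes in $H_2(Y)$ because the cone $3$-cells have boundary on them. The composite $H_2(M_q^+)\to H_2(Y)\to H_2(Y,M_q^+)$ is zero by exactness; the quotient you want is $Y/M_q^+$, not $Y/M_q$. Meanwhile $H_2(Y,M_q^+)\cong H_2(\Cone\Sigma_q,\Sigma_q)\cong H_1(\Sigma_q)$ sends the sphere class $[D_i\cup\Cone(\ell_i)]$ to $\pm[\ell_i]$. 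Since the standard loops form a basis of $H_1(\Sigma_q)$, the span of the sphere classes, which is exactly $\ker\bigl(H_2(Y)\to H_2(C_q)\bigr)$, meets the image of $H_2(M_q^+)$ trivially. Both routes rest on Lemma~\ref{lem:spine}(iii). Yours trades the paper's exponent-sum observation for the independence of the standard loops in $H_1(\Sigma_q)$ and avoids recellulating the cone, at the cost of an extra splitting step.
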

\begin{proof}
By homotopy invariance, $C_q=M_q\cup_{\Sigma_q}\Cone(\Sigma_q)\simeq
\cx{B_q}\cup_{b_q}\Cone(\Sigma_q)$. With the one-vertex structure on $\Sigma_q$, the cone
consists of the cone point, an edge (the cone on the vertex), $2$-cells (the cones on
$s_h,t_h$) and one $3$-cell $e^3$ (the cone on the polygon). Contracting the edge
\cite[Prop.~0.17]{Hatcher} gives $C_q\simeq V_q\cup e^3$.

The cellular boundary of $e^3$ consists of the $2$-chain $b_{q\#}[\Sigma_q]$ on
$\cx{B_q}$, and of the capping $2$-cells with coefficients the exponent sums of $s_h$
and $t_h$ in $\omega_q$, which vanish. The chain $b_{q\#}[\Sigma_q]$ is a $2$-cycle of
$\cx{B_q}$ representing the image of $[\Sigma_q]$, which is zero by
Lemma~\ref{lem:spine}(iii) since $b_q$ corresponds to the inclusion
$\Sigma_q\subset M_q$. A null-homologous $2$-cycle in a two-complex is the zero chain,
so $\partial e^3=0$, and $H_2(V_q)\to H_2(V_q\cup e^3)$ is an isomorphism. As
$V_q\cup e^3\simeq C_q$ is aspherical, $V_q$ is Cockcroft by
Lemma~\ref{lem:toolbox}(d). Finally, eliminating each $s_h,t_h$ by its capping
$2$-cell is a homotopy equivalence from $V_q$ to the presentation complex of the capped
presentation $\pres{Z_q\mid\beta_{q,j}(1,\dots,1,Z_q)}$.
\end{proof}
This proves \ref{B3}, and completes the proofs of Theorem~\ref{thm:blocks},
Proposition~\ref{prop:relative} and Theorem~A. \qed



\end{document}